\documentclass[11pt]{article}

\usepackage[letterpaper,margin=1in]{geometry}
\usepackage{amsmath,amssymb,amsthm,mathtools}
\usepackage{stmaryrd,mathrsfs,bm}
\usepackage{booktabs}
\usepackage{microtype}
\usepackage{algorithm}
\usepackage{algpseudocode}
\usepackage{float}
\usepackage{placeins}
\usepackage[authoryear,round]{natbib}
\usepackage[hidelinks]{hyperref}
\usepackage[nameinlink,capitalise]{cleveref}
\usepackage{graphicx}
\usepackage{xcolor}
\hypersetup{colorlinks=false,hidelinks}

\newtheorem{theorem}{Theorem}[section]
\newtheorem{proposition}[theorem]{Proposition}
\newtheorem{lemma}[theorem]{Lemma}
\newtheorem{corollary}[theorem]{Corollary}
\newtheorem{remark}[theorem]{Remark}

\newcommand{\ii}{i}

\newcommand{\C}{\mathbb{C}}
\newcommand{\Th}{\mathcal{T}_h}
\newcommand{\FhI}{\mathcal{F}_h^{\mathrm I}}
\newcommand{\trnorm}[1]{\lVert #1\rVert_{\mathrm{tr}}}
\newcommand{\GR}{\mathrm{GR}}
\newcommand{\PW}{\mathrm{PW}}
\newcommand{\FB}{\mathrm{FB}}
\newcommand{\avg}[1]{\{\!\!\{#1\}\!\!\}}
\newcommand{\jump}[1]{\llbracket #1\rrbracket}
\newcommand{\Jfun}{\mathcal J}
\DeclareMathOperator{\rank}{rank}
\DeclareMathOperator{\spanop}{span}
\DeclareMathOperator{\diag}{diag}

\renewcommand{\Re}{\operatorname{Re}}
\renewcommand{\Im}{\operatorname{Im}}
\newcommand{\DtN}{\operatorname{DtN}}
\numberwithin{equation}{section}
\providecommand{\alttext}[1]{}

\title{Adaptive local representations for Helmholtz Trefftz discontinuous Galerkin methods}
\author{Shelvean Kapita\\[3pt]\small Department of Mathematics, Texas A\&M University, College Station, TX 77843, USA\\\small \href{mailto:kapita@tamu.edu}{kapita@tamu.edu}}
\date{}

\begin{document}
\maketitle

\begin{abstract}
We study the selection and stable realization of local approximation spaces in Trefftz discontinuous Galerkin discretizations of the Helmholtz equation. A scaled Cauchy-trace inner product places plane waves and Fourier--Bessel functions in a common geometry: Fourier--Bessel modes are orthogonal with explicit weights, while the same weights determine the circulant spectrum of an equispaced plane-wave trace Gram matrix. This separates amplitude scaling from genuine trace-rank loss and yields an exact best-approximation identity for mixed plane-wave--Fourier--Bessel spaces. With complex plane-wave angles, the unresolved modal tail is an exponential sequence, so propagating and evanescent components can be identified by the same ESPRIT/variable-projection procedure. We prove exact recovery and a perturbation estimate for the recovered angles and, under the standard PWDG quasi-optimality bound, transfer these perturbations to the DG error. Trace-Riesz orthonormalization is then separated from a graph--Riesz normalization of the assembled operator. Numerical experiments verify the identities, recover sparse ray fields to roundoff, and resolve a propagating-to-evanescent transition without a prescribed critical angle.
\end{abstract}

\noindent\textbf{Keywords:} Helmholtz equation; Trefftz discontinuous Galerkin method; plane waves; Fourier--Bessel functions; evanescent waves; ESPRIT.\\
\textbf{2020 Mathematics Subject Classification:} 65N30, 65N35, 65N50, 65F35.
\vspace{1em}

\section{Introduction}\label{sec:intro}
Trefftz methods use elementwise solutions of the governing differential equation as trial and test functions.  For the homogeneous Helmholtz equation
\begin{equation}\label{eq:helmholtz}
 \Delta u+\kappa^2u=0,
\end{equation}
plane waves give the plane-wave discontinuous Galerkin (PWDG) and ultra-weak variational formulation (UWVF) families; see, for example, \citep{CessenatDespres1998,GittelsonHiptmairPerugia2009,HiptmairMoiolaPerugia2011,HiptmairMoiolaPerugia2016}.  Their approximation properties are particularly attractive for locally directional wave fields.  The practical difficulty is that the local representation itself may become numerically poor before the globally coupled DG problem is assembled.  Crowded plane-wave directions produce strongly correlated traces, whereas high-order Fourier--Bessel (FB) functions may have very small unscaled trace amplitudes.  The first effect is a loss of effective dimension; the second is largely a coordinate-scaling effect.  Previous work has addressed conditioning and alternative local bases \citep{HuttunenMonkKaipio2002,PerreyDebain2006,LuostariHuttunenMonk2012,CongreveGedickePerugia2019,BarucqEtAl2021}, while direction-adaptive approaches optimize or infer dominant PW directions within a prescribed plane-wave family \citep{AmaraEtAl2014,AgrawalHoppe2017,FangEtAl2017,Kapita2026Directions}.  The present problem is different: both the directional PW component and the complementary FB component are selected in one trace metric, and unresolved local rank is removed before any global system is assembled.  The local selection step is independent of whether the retained Trefftz space is subsequently coupled by PWDG or UWVF; the global conditioning analysis below is stated for the PWDG form.

Let $K$ be contained in a disk $B(x_K,h)$ lying in one homogeneous Helmholtz region.  We equip the Cauchy trace on the circle with
\begin{equation}\label{eq:intro-trace}
 \langle v,w\rangle_{\rm tr}
 =\int_{\partial B(x_K,h)}
 \big(\kappa v\overline w+\kappa^{-1}\partial_n v\,\overline{\partial_n w}\big)\,ds.
\end{equation}
For the regular Fourier--Bessel modes $\varphi_m(\rho,\psi)=J_m(\kappa\rho)e^{\ii m\psi}$, \cref{thm:T1} gives
\begin{equation}\label{eq:tau-intro}
 \langle\varphi_m,\varphi_n\rangle_{\rm tr}=\tau_m^2\delta_{mn},\qquad
 \tau_m^2=2\pi h\kappa\big(J_m(\kappa h)^2+J_m'(\kappa h)^2\big),
\end{equation}
and \cref{thm:T2} shows that the same weights determine the eigenvalues of the trace Gram matrix of $q$ equispaced plane waves,
\begin{equation}\label{eq:lambda-intro}
 \lambda_s=q\sum_{m\equiv-s\ ({\rm mod}\ q)}\tau_m^2,
 \qquad s=0,\ldots,q-1.
\end{equation}
Hence small FB amplitudes can be removed by trace equilibration, while small PW Gram eigenvalues identify genuine redundancy.  This distinction is the basis of the local rank test used below.

The second ingredient is an exact representation formula.  For a complex angle $z\in\C$, set
\begin{equation}\label{eq:complex-pw-intro}
 \psi_z(x)=e^{\ii\kappa d(z)\cdot(x-x_K)},\qquad d(z)=(\cos z,\sin z).
\end{equation}
Since $d(z)\cdot d(z)=1$, $\psi_z$ is Trefftz for every $z$.  For a real angle $\theta$, define the orthonormal directions
\[
 t_\theta=(\cos\theta,\sin\theta),\qquad
 t_\theta^\perp=(-\sin\theta,\cos\theta).
\]
If $z=\theta+\ii\eta$, then $d(z)=\cosh(\eta)t_\theta+\ii\sinh(\eta)t_\theta^\perp$, and its oscillatory and exponential factors are
\begin{equation}\label{eq:evanescent-factor}
 \psi_{\theta+\ii\eta}
 =e^{\ii\kappa\cosh\eta\,t_\theta\cdot(x-x_K)}
  e^{-\kappa\sinh\eta\,t_\theta^\perp\cdot(x-x_K)}.
\end{equation}
Thus $\eta=0$ gives propagation in direction $t_\theta$, whereas $\eta\ne0$ introduces exponential variation in the transverse direction $t_\theta^\perp$; propagation and evanescence belong to the same parameterization.  If $u=\sum_m a_m\varphi_m$, $\gamma_m=\ii^{-m}a_m$, and $\mathcal V_{q,M}(Z_q)$ contains $q$ complex-angle plane waves and the FB modes $|m|\le M$, \cref{thm:T3} proves
\begin{equation}\label{eq:hybrid-intro}
 \inf_{v\in\mathcal V_{q,M}(Z_q)}\|u-v\|_{\rm tr}^2
 =\min_{c\in\C^q}\sum_{|m|>M}\tau_m^2
 \left|\gamma_m-\sum_{j=1}^q c_je^{-\ii m z_j}\right|^2.
\end{equation}
After the resolved FB block is removed, selecting the remaining plane waves is therefore a weighted exponential-fitting problem in the modal index.

The main analytical consequences are as follows.  First, \cref{thm:T1,thm:T2,thm:T3} provide a common local approximation geometry for FB, PW and mixed spaces.  Second, the exponential structure in \eqref{eq:hybrid-intro} permits target-frequency direction identification by ESPRIT, followed by a small variable-projection problem.  \Cref{thm:esprit-exact,thm:esprit-stability} give exact recovery for finite exponential sums and perturbation stability of the recovered complex angles.  Third, \cref{lem:pw-direction-lipschitz,thm:esprit-pwdg-bridge} connect these local angle perturbations to element, boundary-trace and PWDG errors; the final DG estimate is conditional only on the standard quasi-optimality bound for the fixed PWDG flux configuration.  Fourth, stability is treated at two distinct levels.  The selected local family is orthonormalized in the Cauchy-trace metric, whereas the assembled matrix is normalized in the DG graph metric.  \Cref{prop:GR} shows that the latter transformation produces the normal matrix $S-\ii I$ with $S$ Hermitian.  Local trace rank and global operator conditioning are therefore not identified with one another.

The disk construction is a local analytical device, not a geometric restriction on the mesh.  When a homogeneous continuation to a containing circle is unavailable---for example near a source, material interface, obstacle boundary or re-entrant corner---the same candidate spaces are compared by Cauchy least squares directly on the physical element boundary.  The numerical experiments test both settings.  They verify the trace identities independently, compare sparse and distributed angular content, exhibit the high-order rank ceiling of plane-wave traces, recover a propagating-to-evanescent transmission transition without supplying the critical angle, and compare local trace selection with a globally coupled residual search.

The paper is organized as follows.  \Cref{sec:dg} states the Trefftz-DG formulation and graph metric.  \Cref{sec:trace-theory} develops the local trace identities and hybrid approximation formula.  \Cref{sec:conditioning} separates effective local dimension from the conditioning of the assembled system.  \Cref{sec:optimization} develops local direction identification, variable projection and the perturbation-to-DG estimate, and then records the globally coupled residual formulation used for comparison.  \Cref{sec:numerics} presents the numerical results.  Conclusions are given in \cref{sec:conclusion}.

\section{Trefftz-DG formulation and graph metric}\label{sec:dg}
\subsection{PWDG formulation and graph metric}
We use $e^{-\ii\omega t}$, so $e^{\ii\kappa d\cdot x}$ propagates in direction $d$, outgoing cylindrical waves use $H_m^{(1)}$, and
\begin{equation}\label{eq:outgoing}
 \partial_nu-\ii\kappa u=g_A.
\end{equation}
Set the dimensionless flux
\begin{equation}\label{eq:sigma-def}
 \sigma=-\frac{1}{\ii\kappa}\nabla u,\qquad
 \nabla u+\ii\kappa\sigma=0,\qquad
 \nabla\cdot\sigma+\ii\kappa u=0.
\end{equation}
Let $\Th$ be a shape-regular mesh with interior faces $\FhI$.  For $F=\partial K^+\cap\partial K^-$ with outward normals $n^\pm$,
\begin{equation}\label{eq:jumps}
 \avg{v}=\tfrac12(v^++v^-),\quad
 \jump{v}_N=v^+n^++v^-n^-,\quad
 \jump{\nabla v}_N=\nabla v^+\cdot n^+ + \nabla v^-\cdot n^- ,
\end{equation}
where the subscript $N$ labels a normal jump; in particular, $\jump{\nabla v}_N$ is a scalar normal-flux jump and not a norm of $\nabla v$.  The broken Trefftz space is $T(\Th)=\{v\in H^1(\Th):\ \Delta v+\kappa^2v=0\ \text{in every }K\in\Th\}$.

On interior faces we use the conjugated PWDG flux family \citep{CessenatDespres1998,GittelsonHiptmairPerugia2009,HiptmairMoiolaPerugia2011}
\begin{equation}\label{eq:interior-fluxes}
 \widehat u=\avg{u}+\frac{\beta}{\ii\kappa}\jump{\nabla u}_N,
 \qquad
 \widehat\sigma=\avg{\sigma}-\alpha\jump{u}_N,\qquad \alpha,\beta>0.
\end{equation}
On a Dirichlet boundary $\Gamma_D$ with $u=g_D$ we take $\widehat u=g_D$ and $\widehat\sigma=\sigma-\alpha(u-g_D)n$.  On an impedance boundary $\Gamma_A$ satisfying \eqref{eq:outgoing}, with $r_A(u)=\partial_nu-\ii\kappa u-g_A$,
\begin{equation}\label{eq:imp-flux}
 \widehat u=u+\frac{\delta}{\ii\kappa}r_A(u),
 \qquad
 \widehat\sigma=\sigma-(1-\delta)\Big(\sigma\cdot n+u+\frac{g_A}{\ii\kappa}\Big)n,
 \qquad 0<\delta<1.
\end{equation}
Unless stated otherwise, all computations use the symmetric choice $\alpha=\beta=\delta=1/2$.  The transmission tests have no impedance boundary and use $\alpha=\beta=1/2$ with the interface scaling stated in \cref{sec:numerics}.
For Trefftz trial and test functions, elementwise Green identities leave only skeleton terms \citep{HiptmairMoiolaPerugia2011,HiptmairMoiolaPerugiaSurvey}.  The discrete problem is: find $u_h\in V_h\subset T(\Th)$ with
\begin{equation}\label{eq:dg-problem}
 \mathscr A_h(u_h,v_h)=\ell_h(v_h)\qquad\forall v_h\in V_h,
\end{equation}
\begin{align}
\mathscr A_h(u,v)
={}&\int_{\FhI}\Big(
 \avg{u}\,\overline{\jump{\nabla v}_N}
 -\avg{\nabla u}\cdot\overline{\jump{v}_N}\Big)ds
 \nonumber\\
 &-\int_{\FhI}\Big(
 \alpha\ii\kappa\jump{u}_N\cdot\overline{\jump{v}_N}
 -\frac{\beta}{\ii\kappa}\jump{\nabla u}_N\,\overline{\jump{\nabla v}_N}\Big)ds
 \nonumber\\
 &+\int_{\Gamma_D}\Big(-\partial_nu\,\overline v-\alpha\ii\kappa u\overline v\Big)ds
 \nonumber\\
 &+\int_{\Gamma_A}\Big(
 -\delta(\partial_nu\,\overline v+u\,\overline{\partial_nv})
 -\ii(1-\delta)\kappa u\overline v
 +\frac{\delta}{\ii\kappa}\partial_nu\,\overline{\partial_nv}\Big)ds,
 \label{eq:Ah-interior}\\
\ell_h(v)
={}&-\int_{\Gamma_D}g_D\big(\overline{\partial_nv}+\alpha\ii\kappa\overline v\big)ds
\nonumber\\
 &+\int_{\Gamma_A}\Big((1-\delta)g_A\overline v+\frac{\delta}{\ii\kappa}g_A\overline{\partial_nv}\Big)ds.
 \label{eq:lh}
\end{align}
With this convention the penalty terms have negative imaginary part because $1/\ii=-\ii$.  For $v\in T(\Th)$ one obtains the graph identity, the analogue of the coercive skeleton norm of PWDG analysis \citep{GittelsonHiptmairPerugia2009,HiptmairMoiolaPerugia2011},
\begin{align}\label{eq:graph-identity}
 -\Im\mathscr A_h(v,v)
={}&\alpha\kappa\|\jump v_N\|_{0,\FhI}^2
 +\beta\kappa^{-1}\|\jump{\nabla v}_N\|_{0,\FhI}^2
 +\alpha\kappa\|v\|_{0,\Gamma_D}^2\nonumber\\
 &+(1-\delta)\kappa\|v\|_{0,\Gamma_A}^2
 +\delta\kappa^{-1}\|\partial_nv\|_{0,\Gamma_A}^2.
\end{align}
Thus, if $K_h$ is the stiffness matrix of \eqref{eq:dg-problem}, the positive graph matrix is
\begin{equation}\label{eq:G-correct-sign}
 G_h:=\frac{K_h^*-K_h}{2\ii}>0.
\end{equation}
The sign is convention dependent; under $e^{+\ii\omega t}$ the opposite sign is positive.

\begin{proposition}[Definiteness of the PWDG graph metric]\label{prop:graph-definite}
Assume $\alpha,\beta>0$ and $0<\delta<1$.  If $\Gamma_A$ contains a nonempty relatively open boundary segment, then the right-hand side of \eqref{eq:graph-identity} is a norm on every finite-dimensional Trefftz space $V_h$.  If $\Gamma_A=\varnothing$, the same conclusion holds provided $\kappa^2$ is not a Dirichlet eigenvalue of $-\Delta$ on $\Omega$.  Consequently, in either case $G_h>0$ for every linearly independent coefficient basis of $V_h$.
\end{proposition}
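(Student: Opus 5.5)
The plan is to show that the right-hand side of \eqref{eq:graph-identity} is a nonnegative Hermitian form on $T(\Th)$ whose kernel is trivial. Once that is done, restriction to the finite-dimensional $V_h$ gives a norm. The identity \eqref{eq:graph-identity} then says that $c^*G_hc=-\Im\mathscr A_h(v,v)$ for $v=\sum_j c_j\phi_j$, where we read $c^*K_hc=\mathscr A_h(v,v)$ and $\Im z=(z-\bar z)/(2\ii)$ with the sign convention of \eqref{eq:G-correct-sign}. So $G_h>0$ whenever the $\phi_j$ are linearly independent, since then $c\neq0$ implies $v\neq0$. Nonnegativity is immediate because every term carries a positive weight ($\alpha,\beta>0$, $0<\delta<1$). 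The substance is definiteness: if the right-hand side vanishes, then $v=0$.

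Suppose the right-hand side vanishes. Then the following all hold:
\begin{itemize}
\item $\jump v_N=0$ and $\jump{\nabla v}_N=0$ on every interior face;
\item $v=0$ on $\Gamma_D$;
\item $v=\partial_nv=0$ on $\Gamma_A$.
\end{itemize}
The first step is to glue the elementwise pieces into a single global function. On each face $F$ we have $\jump v_N=0$, which means $v^+=v^-$, and $\jump{\nabla v}_N=0$, which means the normal derivatives match. Define $w:=v|_K$ on each $K$. Then $w\in H^1(\Omega)$, since the traces agree. For any test function $\phi\in C_c^\infty(\Omega)$, elementwise integration by parts gives
\[
\int_\Omega w(\Delta\phi+\kappa^2\phi)
=\sum_K\int_{\partial K}\big(v\,\partial_n\phi-\partial_nv\,\phi\big)\,ds .
\]
Here we use $\Delta v+\kappa^2v=0$ in each $K$. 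The interior face contributions cancel because both Cauchy data match, and $\phi$ vanishes near $\partial\Omega$. Hence $w$ is a distributional, and thus by elliptic regularity a real-analytic, solution of the Helmholtz equation in $\Omega$.

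One technical point needs care: making sense of the normal traces of $\nabla v$ on the faces. For a finite-dimensional Trefftz space, the basis functions (plane waves, Fourier--Bessel modes) are smooth up to the element boundaries. So the face integrals are classical, and I would simply assume this regularity. The connectedness of $\Omega$ is also needed, and I would assume it as implicit in the setting.

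Now split into the two cases.
\begin{itemize}
\item \textbf{$\Gamma_A$ contains an open segment $\Sigma$.} Then $w$ has vanishing Cauchy data $w=\partial_nw=0$ on $\Sigma$. Extend $w$ by zero across $\Sigma$ into a small exterior neighborhood. This extension is an $H^1$ weak solution near $\Sigma$, by the same Green-identity computation. It therefore vanishes on an open set, so by unique continuation (analyticity) $w\equiv0$ on the connected domain.
\item \textbf{$\Gamma_A=\varnothing$.} Then $\partial\Omega=\Gamma_D$, so $w\in H^1_0(\Omega)$ and $-\Delta w=\kappa^2w$. Since $\kappa^2$ is assumed not to be a Dirichlet eigenvalue, $w=0$.
\end{itemize}
In both cases $v=0$ on every element, which proves definiteness.

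The main obstacle I expect is the unique-continuation step in the first case. It requires justifying the zero extension across a boundary segment that is only relatively open and possibly sits next to corners. I would handle it by working locally near an interior point of a single straight face of $\Sigma$, where the element function is smooth. This avoids any global boundary regularity assumption.
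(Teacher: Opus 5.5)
Your proposal is correct and follows the same route as the paper. A vanishing graph form forces zero value and normal-flux jumps, so the broken field glues into a global Helmholtz solution. Cauchy uniqueness on the open segment of $\Gamma_A$, or Dirichlet nonresonance when $\Gamma_A=\varnothing$, then gives $v=0$, and $c^*G_hc=-\Im\mathscr A_h(v,v)$ yields $G_h>0$. Your distributional gluing computation and local zero-extension argument fill in steps the paper leaves implicit. So do your explicit assumptions of smoothness up to element boundaries and connectedness of $\Omega$.
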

\begin{proof}
If the graph seminorm of $v\in V_h$ vanishes, both the value and normal-flux jumps vanish on every interior face.  Hence the broken Trefftz field is a global Helmholtz solution with continuous Cauchy data across the mesh.  On $\Gamma_D$ its trace is zero.  If $\Gamma_A\ne\varnothing$, \eqref{eq:graph-identity} also gives $v=\partial_n v=0$ on an open boundary segment, so Cauchy uniqueness gives $v=0$.  If $\Gamma_A=\varnothing$, the homogeneous Dirichlet problem has only the zero solution by the nonresonance hypothesis.  Positivity of the Gram matrix follows.
\end{proof}

\begin{remark}[Interior resonance]\label{rem:resonance}
When $\Gamma_A=\varnothing$, the exclusion of Dirichlet eigenfrequencies is a property of the underlying boundary-value problem, not of the local representation.  Away from exact resonance $G_h$ is positive, but the continuous resolvent and the corresponding quasi-optimality constants can deteriorate as $\kappa^2$ approaches the Dirichlet spectrum.  Representation selection and graph--Riesz normalization do not remove this loss of problem stability.  For exterior-scattering computations the impedance or DtN boundary terms used here avoid this pure-Dirichlet situation; for an interior problem resonance must be treated at the PDE level, for example by changing the boundary condition or by using a shifted formulation appropriate to the application.
\end{remark}

For the truncated nonlocal DtN form we use the corresponding coercivity result of \citet{KapitaMonk2018}; in floating point the Cholesky test remains an implementation check on the retained coordinates, not the source of continuous stability.

\subsection{DtN truncation and graph--Riesz coordinates}\label{sec:dtn-form}
The scattering experiments use a circular Dirichlet-to-Neumann (DtN) boundary $r=R$.  A first-order absorbing condition would add reflection error to the same traces used to judge the local representation, while the Fourier--Hankel DtN map removes that ambiguity, so changes in the error can be attributed to the local representation alone.  Circular DtN maps are classical exact nonreflecting boundary conditions \citep{KellerGivoli1989}, follow from separation of variables \citep{ColtonKressBook}, and were used with PWDG by \citet{KapitaMonk2018}.  The truncated map and residual are
\begin{equation}\label{eq:dtn-map}
 \DtN_Nv=\sum_{|m|\le N}\kappa\frac{H_m^{(1)\prime}(\kappa R)}{H_m^{(1)}(\kappa R)}v_me^{\ii m\theta},
 \qquad r_N(u)=\partial_nu-\DtN_Nu-g_R,
\end{equation}
with the nonlocal flux pair
\begin{equation}\label{eq:dtn-fluxes}
 \widehat u=u+\frac{\delta}{\ii\kappa}r_N(u),
 \qquad
 \ii\kappa\,\widehat\sigma\cdot n
 =-(\DtN_Nu+g_R)-\frac{\delta}{\ii\kappa}\DtN_N^*r_N(u).
\end{equation}
For homogeneous data the boundary form is
\begin{equation}\label{eq:dtn-form}
\mathscr A_R(u,v)=\int_{\Gamma_R}\Big[
 u\,\overline{\partial_nv}-(\DtN_Nu)\overline v
 +\frac{\delta}{\ii\kappa}(\partial_nu-\DtN_Nu)\overline{(\partial_nv-\DtN_Nv)}\Big]ds,
\end{equation}
the $H^{(1)}$ conjugate of the form in \citet{KapitaMonk2018}.  In the computations we form $G_h$ from \eqref{eq:G-correct-sign} and verify definiteness numerically.

After the local space has been fixed, the assembled operator is normalized in the graph metric.
Let $K_h\in\C^{n_h\times n_h}$ be the PWDG matrix after local selection and trace orthonormalization, and write
\begin{equation}\label{eq:Kh-HG-split}
 H_h:=\frac{K_h+K_h^*}{2},\qquad
 G_h:=\frac{K_h^*-K_h}{2\ii},\qquad
 K_h=H_h-\ii G_h.
\end{equation}
For the stable fluxes used here $G_h=G_h^*>0$ on the retained space.  Let $B$ satisfy
\begin{equation}\label{eq:GR-factor}
 B^*G_hB=I.
\end{equation}
The implementation uses $G_h=LL^*$ and $B=L^{-*}$; failure of the Cholesky factorization makes the retained space inadmissible.  Any two exact factors satisfying \eqref{eq:GR-factor} differ by a unitary matrix.

\begin{proposition}[Graph--Riesz normal form]\label{prop:GR}
Let
\begin{equation}\label{eq:Khat-def}
 \widehat K_h:=B^*K_hB,\qquad S:=B^*H_hB.
\end{equation}
Then $\widehat K_h=S-\ii I$ with $S=S^*$.  Hence $\widehat K_h$ is normal,
\begin{equation}\label{eq:GR-singular-values}
 \sigma_j(\widehat K_h)=\sqrt{1+\lambda_j(S)^2},\qquad
 \sigma_{\min}(\widehat K_h)\ge1.
\end{equation}
\end{proposition}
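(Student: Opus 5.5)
The plan is to apply the congruence $X\mapsto B^*XB$ to the splitting \eqref{eq:Kh-HG-split}, use the normalization \eqref{eq:GR-factor} to turn the $G_h$-part into $-\ii I$, and then read off the singular values from the spectral theorem for $S$.

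\emph{Step 1: the normal form.} By linearity of the congruence, \eqref{eq:Kh-HG-split} and \eqref{eq:GR-factor} give
\[
 \widehat K_h=B^*(H_h-\ii G_h)B=B^*H_hB-\ii B^*G_hB=S-\ii I .
\]
Hermiticity of $S$ follows from $H_h^*=H_h$ by definition in \eqref{eq:Kh-HG-split}, since then $S^*=B^*H_h^*B=S$. Here $B$ is invertible, because $G_h>0$ on the retained space; this is guaranteed by \cref{prop:graph-definite} and by the Cholesky admissibility check. Invertibility matters only for interpreting $\widehat K_h$ as a change of coordinates; the identity itself needs only \eqref{eq:GR-factor}.

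\emph{Step 2: normality and singular values.} The matrix $\widehat K_h$ is a polynomial in $S$ and therefore commutes with its adjoint $\widehat K_h^*=S+\ii I$. Explicitly,
\[
 \widehat K_h^*\widehat K_h=(S+\ii I)(S-\ii I)=S^2+I=\widehat K_h\widehat K_h^*.
\]
Diagonalize $S=U\Lambda U^*$ with $U$ unitary and $\Lambda$ real. Then $\widehat K_h=U(\Lambda-\ii I)U^*$, so its eigenvalues are $\lambda_j(S)-\ii$. For a normal matrix the singular values are the moduli of the eigenvalues, which gives $\sigma_j=|\lambda_j-\ii|=\sqrt{1+\lambda_j^2}\ge1$. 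The same conclusion follows directly from $\widehat K_h^*\widehat K_h=I+S^2\succeq I$.

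\emph{Main obstacle.} There is no real obstacle; the only subtlety is the sign convention. The sign of $G_h$ in \eqref{eq:Kh-HG-split} must match the positive graph matrix \eqref{eq:G-correct-sign}, so that $K_h=H_h-\ii G_h$ holds exactly. This can be checked by computing $H_h-\ii G_h=\tfrac12(K_h+K_h^*)-\tfrac12(K_h^*-K_h)=K_h$.

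Finally, replacing $B$ by $BW$ with $W$ unitary changes $S$ to $W^*SW$. The spectrum of $S$, and hence the singular values, do not change, so the statement is independent of the choice of factor.
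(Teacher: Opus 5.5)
Your proposal is correct and follows the paper's proof essentially verbatim: apply the congruence by $B$ to $K_h=H_h-\ii G_h$, and use $B^*G_hB=I$ to get $S-\ii I$ with $S$ Hermitian. The singular values are then the moduli of the eigenvalues $\lambda_j(S)-\ii$ of this normal matrix. Your extra remarks are all correct but go beyond what the paper's proof uses: the direct identity $\widehat K_h^*\widehat K_h=I+S^2$, the sign check on the splitting, and the invariance of the spectrum of $S$ under $B\mapsto BW$ with $W$ unitary.
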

\begin{proof}
$S=B^*H_hB$ is Hermitian and \eqref{eq:GR-factor} gives
$B^*K_hB=B^*H_hB-\ii B^*G_hB=S-\ii I$.  A Hermitian matrix commutes with the identity, so $S-\ii I$ is normal.  Its singular values are the moduli of the eigenvalues $\lambda_j(S)-\ii$, which gives \eqref{eq:GR-singular-values}.
\end{proof}

In exact arithmetic, \eqref{eq:GR-factor} gives
\begin{equation}\label{eq:kappa-GR-def}
 \kappa_{\GR}:=\kappa_2(\widehat K_h)
 =\frac{\max_j\sqrt{1+\lambda_j(S)^2}}
        {\min_j\sqrt{1+\lambda_j(S)^2}}.
\end{equation}
Thus $\sigma_{\min}(\widehat K_h)\ge1$ and growth of $\kappa_{\GR}$ is governed by the Hermitian part $S$.  Numerical values of $\kappa_{\GR}$ are computed by SVD of the actual floating-point matrix $B^*K_hB$.  Local rank decisions use only the trace metric; $\kappa_{\GR}$ measures the final globally coupled solve.

\section{Local Cauchy-trace geometry}\label{sec:trace-theory}
\subsection{Trace metric and modal identities}
Fix $K$ and a disk $B_K=B(x_K,h)\supset K$ contained in the same homogeneous Helmholtz region.  This local analyticity assumption is the only role of the disk; near sources, interfaces, or obstacle boundaries the selector instead uses element-boundary trace fitting.  Regular Helmholtz fields on $B_K$ admit the Fourier--Bessel expansion \citep{Watson1944,MoiolaHiptmairPerugia2011}.
Write
\begin{equation}\label{eq:radius-admissible}
 r_K:=\sup_{x\in K}|x-x_K|,\qquad
 d_K:=\operatorname{dist}(x_K,\Sigma_K),
\end{equation}
where $\Sigma_K$ is the nearest source, material interface, obstacle boundary, or other set across which the same regular Helmholtz continuation is unavailable.  The disk construction is admissible whenever
\begin{equation}\label{eq:radius-window}
 r_K<h<d_K.
\end{equation}
All identities below hold for every such $h$.  Numerically, $h$ also fixes $\kappa h$ and hence the modal weights $\tau_m$; it is therefore part of the rank-resolution scale, not a physical mesh parameter.  We use the smallest convenient containing disk with a modest geometric margin.  If \eqref{eq:radius-window} is empty, no disk continuation is invoked and the same candidate spaces are compared by Cauchy least squares on $\partial K$.
\begin{equation}\label{eq:FBexpansion}
 u(\rho,\psi)=\sum_{m\in\mathbb Z}a_m\varphi_m(\rho,\psi),\qquad
 \varphi_m=J_m(\kappa\rho)e^{\ii m\psi},
\end{equation}
and on $\partial B_K$ we define the scaled Cauchy trace inner product
\begin{equation}\label{eq:trace-inner-disk}
 \langle v,w\rangle_{\mathrm{tr}}
 =\int_{\partial B_K}\big(\kappa v\overline w+\kappa^{-1}\partial_nv\,\overline{\partial_nw}\big)ds.
\end{equation}
The weights are the value/flux scaling of the PWDG graph norm.

The Fourier--Bessel basis diagonalizes this metric.
\begin{theorem}[Trace orthogonality]\label{thm:T1}
The functions $\varphi_m$ are orthogonal in \eqref{eq:trace-inner-disk}:
\begin{equation}\label{eq:T1}
 \langle\varphi_m,\varphi_n\rangle_{\mathrm{tr}}=\tau_m^2\delta_{mn},\qquad
 \tau_m^2=2\pi h\kappa\big(J_m(\kappa h)^2+J_m'(\kappa h)^2\big),
\end{equation}
and $\tau_m>0$ for every $m$ when $\kappa h>0$.
\end{theorem}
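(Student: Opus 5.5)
We compute the trace inner product directly in polar coordinates centred at $x_K$, exploiting the fact that each $\varphi_m$ separates into a radial factor and the angular exponential $e^{\ii m\psi}$. On the circle $\rho=h$ the outward normal derivative is the radial derivative, so
\[
 \varphi_m|_{\partial B_K}=J_m(\kappa h)e^{\ii m\psi},\qquad
 \partial_n\varphi_m|_{\partial B_K}=\kappa J_m'(\kappa h)e^{\ii m\psi},
\]
and the arclength element is $ds=h\,d\psi$ for $\psi\in[0,2\pi)$.

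Inserting these traces into \eqref{eq:trace-inner-disk} gives
\[
 \langle\varphi_m,\varphi_n\rangle_{\mathrm{tr}}
 =h\Big(\kappa J_m(\kappa h)J_n(\kappa h)+\kappa^{-1}\kappa^2J_m'(\kappa h)J_n'(\kappa h)\Big)\int_0^{2\pi}e^{\ii(m-n)\psi}\,d\psi .
\]
Here we use that $J_m$ is real-valued for real arguments and integer order, so conjugation acts only on the exponential. The angular integral equals $2\pi\delta_{mn}$, and the radial prefactor is then $h\kappa\big(J_m(\kappa h)^2+J_m'(\kappa h)^2\big)$ when $m=n$. Together these give \eqref{eq:T1}. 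Note that the $\kappa^{-1}$ weight on the flux term exactly cancels one factor of $\kappa$ from the chain rule, which is why the value and flux contributions appear with the same weight $\kappa$.

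For positivity it suffices to show that $J_m(x)$ and $J_m'(x)$ cannot vanish simultaneously at $x=\kappa h>0$. Suppose they did. Since $J_m$ solves Bessel's equation $x^2y''+xy'+(x^2-m^2)y=0$, which is a regular second-order linear ODE on $(0,\infty)$, uniqueness for the initial value problem at $x_0=\kappa h$ would force $J_m\equiv0$ on $(0,\infty)$. This contradicts, for example, the small-argument behaviour $J_m(x)\sim (x/2)^{|m|}/|m|!$, using $J_{-m}=(-1)^mJ_m$ for negative orders. Hence $\tau_m^2>0$.

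This last step is the only non-routine point, and it is dispatched by the ODE uniqueness argument. An alternative route is the Wronskian $W(J_m,Y_m)(x)=2/(\pi x)\neq0$, which likewise rules out a common zero of $J_m$ and $J_m'$.
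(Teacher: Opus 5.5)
Your proof is correct and follows the same route as the paper: evaluate the Cauchy traces on $\rho=h$, use $ds=h\,d\psi$ and Fourier orthogonality, then rule out a common zero of $J_m$ and $J_m'$ by uniqueness for Bessel's equation with zero Cauchy data. Your extra details (real-valuedness of $J_m$, the explicit small-argument contradiction, which is correct up to a harmless sign $(-1)^{|m|}$ for negative $m$, and the Wronskian alternative) only make explicit what the paper leaves implicit.
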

\begin{proof}
On $\rho=h$, $\varphi_m=J_m(\kappa h)e^{\ii m\psi}$ and $\partial_n\varphi_m=\kappa J_m'(\kappa h)e^{\ii m\psi}$.  Substitute into \eqref{eq:trace-inner-disk} with $ds=h\,d\psi$ and use Fourier orthogonality.  If $J_m$ and $J_m'$ vanished at the same $z>0$, uniqueness for the Bessel equation with zero Cauchy data would give $J_m\equiv0$.
\end{proof}

\begin{corollary}[Equilibrated modal isometry]\label{cor:isometry}
If $u$ has the expansion \eqref{eq:FBexpansion}, then
\begin{equation}\label{eq:isometry}
 \trnorm{u}^2=\sum_{m\in\mathbb Z}|a_m|^2\tau_m^2,
\end{equation}
and the quantities
\begin{equation}\label{eq:bm}
 b_m:=\tau_ma_m=\big\langle u,\varphi_m/\tau_m\big\rangle_{\mathrm{tr}}
\end{equation}
are orthonormal coordinates of the trace.  The map $u\mapsto(b_m)$ is an isometry into $\ell^2$.
\end{corollary}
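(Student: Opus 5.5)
The identity \eqref{eq:isometry} will follow from \cref{thm:T1} once the series \eqref{eq:FBexpansion} can be paired termwise in the trace inner product. So the first step is to justify convergence of the expansion, together with its normal derivative, on $\partial B_K$. By the admissibility window \eqref{eq:radius-window}, $u$ is a regular Helmholtz field on a slightly larger disk $B(x_K,h')$ with $h<h'<d_K$. On that disk the Fourier--Bessel series converges absolutely and uniformly on $\overline{B_K}$, and so does its termwise gradient. This is the standard consequence of the Bessel asymptotics $J_m(\kappa\rho)\sim(\kappa\rho/2)^{|m|}/|m|!$ combined with the bound on the $a_m$ inherited from the larger circle. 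Hence the partial sums $u_N=\sum_{|m|\le N}a_m\varphi_m$ converge to $u$ in the norm $\trnorm{\cdot}$, since both $u_N\to u$ and $\partial_n u_N\to\partial_n u$ hold uniformly on $\partial B_K$.

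With this in hand, I compute $\trnorm{u_N}^2$ by sesquilinearity and \cref{thm:T1}: the cross terms vanish, leaving $\sum_{|m|\le N}|a_m|^2\tau_m^2$. Letting $N\to\infty$ and using continuity of the norm gives \eqref{eq:isometry}. The same continuity, applied to $\langle u_N,\varphi_m\rangle_{\rm tr}$ for fixed $m$, gives $\langle u,\varphi_m\rangle_{\rm tr}=a_m\tau_m^2$. Dividing by $\tau_m$, which is positive by \cref{thm:T1}, gives \eqref{eq:bm}.

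Finally, \cref{thm:T1} shows that the family $\{\varphi_m/\tau_m\}$ is orthonormal in the trace metric. The $b_m$ are exactly the coefficients of $u$ with respect to this family, and \eqref{eq:isometry} reads $\sum_m|b_m|^2=\trnorm{u}^2$. Thus $u\mapsto(b_m)$ is a linear isometry into $\ell^2$. It is injective on regular fields, because vanishing $b_m$ forces all $a_m=0$. No completeness argument is needed, since Parseval here comes directly from the convergent expansion.

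The main obstacle is the convergence step, and specifically the normal derivative. Convergence of the values alone is not enough, because the trace norm also weights $\partial_n u$. Using a margin $h'>h$, made possible by the strict inequality $h<d_K$, is what gives geometric decay $(h/h')^{|m|}$ for both $J_m(\kappa h)$ and $J_m'(\kappa h)$ relative to their values at $h'$. I would cite \citep{Watson1944,MoiolaHiptmairPerugia2011} for this decay rather than rederive it.
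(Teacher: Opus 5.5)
Your proposal is correct and follows the paper's route: the corollary is stated without a separate proof as an immediate consequence of the orthogonality in \cref{thm:T1} (Parseval for the Fourier--Bessel expansion), and your truncation-and-limit step supplies the termwise-pairing justification that the paper leaves implicit. The margin $h<h'<d_K$ is convenient but not essential: on $\rho=h$ the value and normal-flux traces are Fourier series with coefficients $a_mJ_m(\kappa h)$ and $a_m\kappa J_m'(\kappa h)$, so Fourier--Parseval on the circle, as in the proof of \cref{thm:T1}, would give \eqref{eq:isometry} whenever the Cauchy trace is merely square integrable.
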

\begin{remark}\label{rem:raw-coeff}
The isometry concerns the equilibrated coefficients $b_m$, not the raw coefficients $a_m=b_m/\tau_m$.  At high order $\tau_m$ is very small (\cref{fig:tau}a), so recovering unscaled $a_m$ amplifies roundoff.  Rank decisions must be made after trace equilibration.
\end{remark}

\begin{figure}[H]
\centering
\includegraphics[width=.72\textwidth]{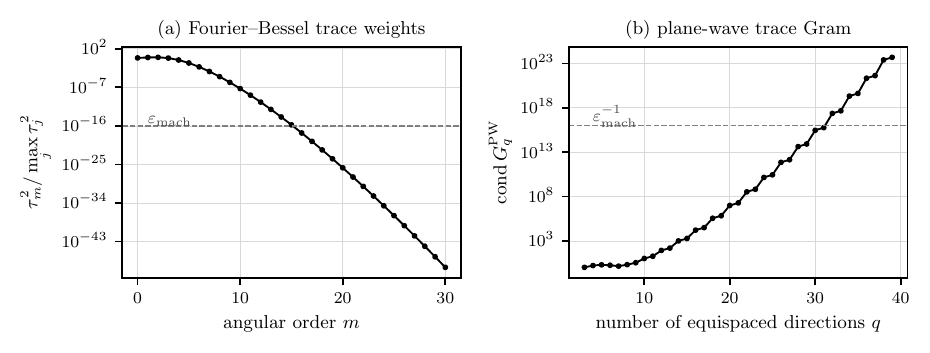}
\caption{Local trace geometry at $k=16$ and $kh=3.34$. Left: Fourier--Bessel trace weights $\tau_m$. Right: condition number of the equispaced plane-wave trace Gram matrix obtained from \eqref{eq:T2-eigs}.}
\alttext{Alt text: Two-panel plot of local trace geometry at $k=16$. Left: the Fourier--Bessel trace weights decay rapidly with mode number. Right: the condition number of the equispaced plane-wave trace Gram matrix increases sharply as the number of plane-wave directions grows.}
\label{fig:tau}
\end{figure}

The same weights also determine the spectrum of equispaced propagating plane-wave traces.  For $z\in\C$ let $\psi_z$ be \eqref{eq:complex-pw-intro}.  Jacobi--Anger gives, first for real $z$ and then for complex $z$ by analytic continuation,
\begin{equation}\label{eq:JA}
 \psi_z(\rho,\psi)=\sum_{m\in\mathbb Z}\ii^mJ_m(\kappa\rho)e^{\ii m\psi}e^{-\ii mz}.
\end{equation}
Thus the phase-corrected modal sequence of one complex-angle plane wave is $e^{-\ii mz}$.

\begin{theorem}[Plane-wave Gram symbol]\label{thm:T2}
For real angles $\theta_j,\theta_\ell$,
\begin{equation}\label{eq:T2-Gram}
 \langle\psi_{\theta_j},\psi_{\theta_\ell}\rangle_{\mathrm{tr}}
 =\sum_{m\in\mathbb Z}\tau_m^2e^{-\ii m(\theta_j-\theta_\ell)}.
\end{equation}
If $\theta_j=2\pi j/q$, $j=0,\ldots,q-1$, then $G_q^{\PW}$ is circulant with
\begin{equation}\label{eq:T2-eigs}
 \lambda_s=q\sum_{m\equiv -s\pmod q}\tau_m^2,
 \qquad s=0,\ldots,q-1.
\end{equation}
\end{theorem}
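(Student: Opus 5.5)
The plan is to expand both plane waves by the Jacobi--Anger formula \eqref{eq:JA} and then apply \cref{thm:T1}. For real $\theta$ the series for $\psi_\theta$ converges absolutely and uniformly, together with its $\rho$-derivative, on $\rho=h$, because $J_m(\kappa h)$ and $J_m'(\kappa h)$ decay superexponentially in $|m|$. So $\psi_\theta$ has the expansion \eqref{eq:FBexpansion} with $a_m=\ii^m e^{-\ii m\theta}$, and $|a_m|=1$.

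Polarizing \cref{cor:isometry}, or substituting both series into \eqref{eq:trace-inner-disk} and interchanging sum and integral (justified by uniform convergence on the circle), gives
\[
 \langle\psi_{\theta_j},\psi_{\theta_\ell}\rangle_{\rm tr}=\sum_{m,n}a_m^{(j)}\overline{a_n^{(\ell)}}\langle\varphi_m,\varphi_n\rangle_{\rm tr}=\sum_m \tau_m^2\,\ii^m\overline{\ii^m}\,e^{-\ii m\theta_j}e^{\ii m\theta_\ell}.
\]
Since $|\ii^m|=1$, this is \eqref{eq:T2-Gram}. The series converges absolutely, since $\sum_m\tau_m^2=\trnorm{\psi_\theta}^2<\infty$.

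For the equispaced case, \eqref{eq:T2-Gram} shows that the $(j,\ell)$ entry depends only on $\theta_j-\theta_\ell=2\pi(j-\ell)/q$. Moreover it is $2\pi$-periodic, so it depends on $j-\ell \bmod q$, which makes $G_q^{\PW}$ circulant. Define $c_r=\sum_m\tau_m^2 e^{-2\pi\ii mr/q}$, so that $G_{j\ell}=c_{j-\ell}$. Test $G$ against the Fourier vectors $v^{(s)}_\ell=e^{2\pi\ii s\ell/q}$:
\[
 (Gv^{(s)})_j=\sum_{\ell}\sum_m\tau_m^2 e^{-2\pi\ii m(j-\ell)/q}e^{2\pi\ii s\ell/q}.
\]
Swap the finite $\ell$-sum with the absolutely convergent $m$-sum and write $\ell$-dependence as $e^{2\pi\ii(m+s)\ell/q}$. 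The character sum $\sum_{\ell=0}^{q-1}e^{2\pi\ii(m+s)\ell/q}$ equals $q$ when $m\equiv -s \pmod q$ and $0$ otherwise. On the surviving indices, $e^{-2\pi\ii mj/q}=e^{2\pi\ii sj/q}$, so $Gv^{(s)}=\lambda_s v^{(s)}$ with $\lambda_s$ as in \eqref{eq:T2-eigs}. The $q$ vectors $v^{(s)}$ are orthogonal, so these are all the eigenvalues, and they are nonnegative and real, consistent with Hermitian positivity.

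The only delicate point is the analytic justification: the FB expansion of $\psi_\theta$ must hold in the Cauchy-trace sense (value and normal derivative), so that \cref{cor:isometry} applies. I would handle this by citing the uniform convergence of Jacobi--Anger and its termwise derivative on compact sets, which follows from the bound $|J_m(x)|\le (x/2)^{|m|}/|m|!$ and the recurrence for $J_m'$. Everything else is bookkeeping of signs in the index $m\equiv -s$, which depends on the conjugation convention in \eqref{eq:trace-inner-disk} and on labeling the eigenvector as $e^{+2\pi\ii s\ell/q}$. I will fix that labeling so that the congruence reads exactly as stated.
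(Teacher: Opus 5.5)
Your proposal is correct and follows the paper's own argument: Jacobi--Anger gives $a_m=\ii^m e^{-\ii m\theta}$, and \cref{thm:T1} (equivalently, the polarized isometry) collapses the double sum to $\sum_m\tau_m^2e^{-\ii m(\theta_j-\theta_\ell)}$. Testing the circulant against the Fourier vectors then leaves exactly the residue class $m\equiv -s\pmod q$, and your uniform-convergence remarks supply the justification that the paper leaves implicit; the sign bookkeeping you flag is in fact immaterial, because $J_{-m}=(-1)^mJ_m$ gives $\tau_{-m}=\tau_m$ and hence $\lambda_s=\lambda_{q-s}$.
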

\begin{proof}
Insert \eqref{eq:JA} into \eqref{eq:trace-inner-disk} and use \cref{thm:T1}.  For equispaced real angles, a discrete Fourier transform leaves precisely the residue classes in \eqref{eq:T2-eigs}.
\end{proof}
Small PW eigenvalues are therefore correlation, not amplitude scaling; diagonal equilibration cannot remove them.

\subsection{Hybrid approximation and representation crossover}
For $Z_q=(z_1,\ldots,z_q)\in\C^q$ define
\begin{equation}\label{eq:hybrid-space-new}
 \mathcal V_{q,M}(Z_q)=\spanop\{\psi_{z_j}:1\le j\le q\}
 +\spanop\{\varphi_m:|m|\le M\}.
\end{equation}

\begin{theorem}[Hybrid error]\label{thm:T3}
Let $u$ satisfy \eqref{eq:FBexpansion}, $\gamma_m=\ii^{-m}a_m$, and $|\Im z_j|\le\eta_{\max}<\infty$.  Then
\begin{equation}\label{eq:T3}
 \inf_{v\in\mathcal V_{q,M}(Z_q)}\trnorm{u-v}^2
 =\min_{c\in\C^q}\sum_{|m|>M}\tau_m^2
 \left|\gamma_m-\sum_{j=1}^qc_je^{-\ii mz_j}\right|^2.
\end{equation}
Repeated angles are allowed; they only make the PW coefficient vector nonunique.
\end{theorem}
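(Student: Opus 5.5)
The plan is to transfer the approximation problem to modal coordinates through the isometry of \cref{cor:isometry} and then minimize over the Fourier--Bessel coefficients explicitly.

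A general element of $\mathcal V_{q,M}(Z_q)$ is $v=\sum_j c_j\psi_{z_j}+\sum_{|m|\le M}d_m\varphi_m$ with $c\in\C^q$ and $d_m\in\C$. The first step is to compute the Fourier--Bessel coefficients of $v$. By the Jacobi--Anger expansion \eqref{eq:JA}, the $m$-th coefficient of $\psi_{z_j}$ is $\ii^m e^{-\ii m z_j}$. Hence
\[
a_m(v)=\ii^m\sum_j c_je^{-\ii mz_j}+d_m\mathbf 1_{|m|\le M},
\qquad
a_m(u-v)=\ii^m\Big(\gamma_m-\sum_j c_je^{-\ii mz_j}\Big)-d_m\mathbf 1_{|m|\le M}.
\]

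The second step is to justify applying \eqref{eq:isometry} to $u-v$. Here the hypothesis $|\Im z_j|\le\eta_{\max}$ is used. The bound $|e^{-\ii mz_j}|\le e^{|m|\eta_{\max}}$ grows only geometrically, while $\tau_m^2$ decays superexponentially, since $J_m(\kappa h)^2+J_m'(\kappa h)^2\sim (e\kappa h/2|m|)^{2|m|}\cdot\mathrm{poly}(|m|)$. Therefore $\sum_m\tau_m^2|e^{-\ii mz_j}|^2<\infty$. Moreover, the expansion of $\psi_{z_j}$ and its radial derivative converge uniformly on $\rho=h$ for fixed complex $z_j$, because $\psi_{z_j}$ is entire. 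Parseval on the circle then gives $\trnorm{\psi_{z_j}}^2=\sum_m\tau_m^2|e^{-\ii mz_j}|^2$, exactly as in the proof of \cref{thm:T1}, and by linearity the same holds for $u-v$. I expect this step to be the main obstacle, though only a mild one: it is the only place where analytic continuation and convergence must be checked rather than just algebra. I would handle it by citing the standard Bessel asymptotics $J_m(x)\approx (x/2)^{|m|}/|m|!$ for fixed $x$ and $|m|\to\infty$, with the analogous bound for $J_m'$.

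The third step is the minimization. Using $|\ii^m|=1$, the isometry gives
\[
\trnorm{u-v}^2=\sum_{|m|\le M}\tau_m^2\Big|\ii^m\Big(\gamma_m-\sum_jc_je^{-\ii mz_j}\Big)-d_m\Big|^2+\sum_{|m|>M}\tau_m^2\Big|\gamma_m-\sum_jc_je^{-\ii mz_j}\Big|^2 .
\]
For fixed $c$, the choice $d_m=\ii^m(\gamma_m-\sum_jc_je^{-\ii mz_j})$ kills the first sum, and the second sum does not depend on $d$. Thus the infimum over $v$ equals the infimum over $c$ of the tail functional.

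The last step is to show that this infimum is attained, so that it is a minimum. The tail functional is a weighted least-squares problem: the squared distance in the weighted space $\ell^2(\{|m|>M\},\tau_m^2)$ from $(\gamma_m)$ to the span of the $q$ sequences $(e^{-\ii mz_j})_{|m|>M}$. These sequences lie in that space by the second step. A finite-dimensional subspace of a Hilbert space is closed, so the orthogonal projection exists and the minimum is attained. If some angles coincide, or more generally if the sequences are linearly dependent, the span is unchanged and only the minimizing $c$ becomes nonunique. This gives the remark on repeated angles.
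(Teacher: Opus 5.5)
Your proposal is correct and follows the paper's proof. It uses Jacobi--Anger to identify the plane-wave modal coefficients and the bounded strip with factorial decay of $\tau_m$ to make the weighted series converge. It then cancels the $|m|\le M$ block exactly by choice of the Fourier--Bessel coefficients and applies \cref{cor:isometry} to the tail; your extra argument that the minimum over $c$ is attained (closedness of a finite-dimensional subspace of the weighted $\ell^2$ space) makes explicit what the paper leaves implicit in ``minimize over $c$''.
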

\begin{proof}
By \eqref{eq:JA}, the PW block contributes $\sum_jc_je^{-\ii mz_j}$ after removal of the phase $\ii^m$.  The bounded-strip hypothesis and the factorial decay of $J_m(\kappa h)$ and $J_m'(\kappa h)$ imply $\tau_m e^{|m|\eta_{\max}}\in\ell^2$, so the weighted modal series is well defined.  For fixed $c$, the FB coefficients with $|m|\le M$ cancel the low-order residual exactly.  Apply \cref{cor:isometry} to the remaining tail and minimize over $c$.
\end{proof}

\begin{corollary}[Sparse complex rays]\label{cor:sparse}
If $u$ is a superposition of $q_*$ plane waves $\psi_{z_j}$ whose distinct complex angles are contained in $Z_q$, then the right side of \eqref{eq:T3} vanishes for $q\ge q_*$, independently of $M$.
\end{corollary}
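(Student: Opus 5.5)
The plan is to exhibit an explicit coefficient vector $c\in\C^q$ for which every tail term in \eqref{eq:T3} vanishes. Since the right side of \eqref{eq:T3} is a minimum of nonnegative quantities, that is enough.

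Write $u=\sum_{k=1}^{q_*}\mu_k\psi_{w_k}$ with distinct $w_k$. Each $w_k$ equals some $z_{j(k)}$; fix one such index $j(k)$ for each $k$. Define $c_{j(k)}=\mu_k$ and set all other $c_j=0$. Because the $w_k$ are distinct, distinct $k$ give distinct indices $j(k)$, so this assignment is well defined. Repeated entries in $Z_q$ are harmless, in line with the remark after \cref{thm:T3}. An injection from $\{1,\ldots,q_*\}$ into $\{1,\ldots,q\}$ exists only when $q\ge q_*$, which is where that hypothesis enters. In fact containment of the distinct angles in $Z_q$ already forces $q\ge q_*$.

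Next identify the modal sequence of $u$. Applying the Jacobi--Anger formula \eqref{eq:JA} to each term gives $a_m=\ii^m\sum_k\mu_ke^{-\ii mw_k}$, so $\gamma_m=\ii^{-m}a_m=\sum_k\mu_ke^{-\ii mw_k}$. This holds for every $m\in\mathbb Z$ by uniqueness of Fourier coefficients on circles $\rho<h$. For $\rho$ with $J_m(\kappa\rho)\neq0$ one simply compares coefficients.

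With the choice of $c$ above, $\gamma_m-\sum_jc_je^{-\ii mz_j}=0$ for every $m$. In particular the residual is zero for $|m|>M$, whatever the value of $M$. The minimum in \eqref{eq:T3} is therefore $0$.

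One point needs checking: \cref{thm:T3} assumes $|\Im z_j|\le\eta_{\max}$, and $u$ must have an expansion of the form \eqref{eq:FBexpansion}. The first holds because $Z_q$ is a finite set. The second holds because a finite sum of entire Trefftz functions is regular on $B_K$. No step is a real obstacle; the only care needed is the index bookkeeping when $Z_q$ contains repeated angles.
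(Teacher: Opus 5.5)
Your proposal is correct and follows the paper's own argument. You place the generating plane-wave coefficients on matching entries of $Z_q$, set the remaining coefficients to zero, and use the Jacobi--Anger expansion \eqref{eq:JA} to get $\gamma_m=\sum_j c_j e^{-\ii m z_j}$ for every $m$, so every tail term vanishes whatever $M$ is; your extra bookkeeping (the injection $k\mapsto j(k)$, the strip bound from finiteness of $Z_q$, uniqueness of the modal coefficients) only spells out what the paper's two-line proof leaves implicit.
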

\begin{proof}
Choose the generating PW coefficients and set the others to zero.  Equation \eqref{eq:JA} reproduces every modal coefficient.
\end{proof}

The hybrid identity also yields an exact field-dependent crossover criterion.
Let
\begin{equation}\label{eq:rho-field}
 u(\rho)=u_{\rm ray}+\rho u_{\rm diff},\qquad \rho\ge0,
\end{equation}
where $u_{\rm ray}$ is a finite ray field and $u_{\rm diff}$ has nonsparse equilibrated modal content on the resolved window.  Let $g_{\rm ray},g_{\rm diff}\in\ell^2$ be their equilibrated coefficient vectors from \cref{cor:isometry}; hence
\begin{equation}\label{eq:g-rho}
 g(\rho)=g_{\rm ray}+\rho g_{\rm diff}.
\end{equation}
For a fixed candidate space $V$, let $P_V$ be the orthogonal projector onto its equilibrated modal image.  Then
\begin{equation}\label{eq:EV}
 E_V(\rho)^2:=\inf_{v\in V}\trnorm{u(\rho)-v}^2
 =\|(I-P_V)g(\rho)\|_2^2
 =A_V+2\rho B_V+\rho^2C_V,
\end{equation}
where, with $r_{\rm ray}=(I-P_V)g_{\rm ray}$ and $r_{\rm diff}=(I-P_V)g_{\rm diff}$,
\begin{equation}\label{eq:ABC}
 A_V=\|r_{\rm ray}\|_2^2,\qquad
 B_V=\Re(r_{\rm ray}^*r_{\rm diff}),\qquad
 C_V=\|r_{\rm diff}\|_2^2.
\end{equation}

For two candidate spaces $V_1$ and $V_2$, equality of the two errors is equivalent to
\begin{equation}\label{eq:crossover-poly}
 (C_{V_1}-C_{V_2})\rho^2+2(B_{V_1}-B_{V_2})\rho+(A_{V_1}-A_{V_2})=0 .
\end{equation}
Thus the representation crossover is obtained from a scalar quadratic.  If \eqref{eq:crossover-poly} has no nonnegative real root, the ordering of $V_1$ and $V_2$ is fixed for all $\rho\ge0$; otherwise its nonnegative roots partition that half-line into intervals of fixed ordering.  No DG solve is required for this comparison.

\section{Stable local coordinates and effective trace dimension}\label{sec:conditioning}
For a selected local family $\Psi_K=[\psi_1,\ldots,\psi_{p_K}]$, define
\begin{equation}\label{eq:Gtr-raw-def}
 G_{K,\rm raw}^{\rm tr}=\big(\langle\psi_j,\psi_\ell\rangle_{{\rm tr},K}\big)_{j,\ell},
 \qquad
 D_K=\diag\big(\|\psi_j\|_{{\rm tr},K}^{-1}\big),
 \qquad
 G_{K,\rm eq}^{\rm tr}=D_K^*G_{K,\rm raw}^{\rm tr}D_K.
\end{equation}
The diagonal scaling removes trace amplitude; the spectrum of $G_{K,\rm eq}^{\rm tr}$ measures correlation.  For a Hermitian positive semidefinite Gram matrix $G$, set
\begin{equation}\label{eq:reff}
 r_{\rm eff}(G;\varepsilon)=\#\{j:\lambda_j(G)\ge\varepsilon\lambda_{\max}(G)\}.
\end{equation}
PW directions are retained only when the equilibrated PW block has full nominal rank.  FB modes are equilibrated before any rank decision; the complete hybrid block is then tested for cross-family redundancy.  If $T_K$ is the resulting trace-Riesz map,
\begin{equation}\label{eq:Gtr-orth-def}
 T_K^*G_{K,\rm raw}^{\rm tr}T_K=I,
 \qquad
 \kappa_{\rm tr,final}:=\max_K\kappa_2(T_K^*G_{K,\rm raw}^{\rm tr}T_K).
\end{equation}
This is a local coordinate quantity.  The assembled PWDG operator has the separate graph--Riesz condition number $\kappa_{\GR}$ of \eqref{eq:kappa-GR-def}.

\begin{table}[htbp]
\centering
\caption{Local trace-Gram condition numbers at $\kappa h=3.34$.  PW columns are unscaled; the FB column is trace-equilibrated.}
\label{tab:localcond}
\footnotesize
\begin{tabular}{rrrr}
\toprule
local count & PW Gram, disk & PW Gram, element & equilibrated FB, element\\
\midrule
9  & $3.6$ & $1.7\times10^1$ & $1.95$\\
15 & $1.9\times10^3$ & $4.8\times10^4$ & $9.9$\\
21 & $2.0\times10^7$ & $2.6\times10^9$ & $7.0\times10^1$\\
27 & $1.4\times10^{12}$ & $8.5\times10^{14}$ & $5.0\times10^2$\\
33 & $>10^{15}$ & rank deficient & $4.2\times10^3$\\
\bottomrule
\end{tabular}
\end{table}

The selected local space is orthonormalized in the trace metric before global assembly.
Local rank loss is removed before global coupling \citep{HuttunenMonkKaipio2002,CongreveGedickePerugia2019,BarucqEtAl2021}.

Scale the selected family by $D_K$.  Apply the rank threshold to the equilibrated PW block, retain resolvable FB modes, then apply the same threshold to the full hybrid Gram to remove cross-family redundancy.  If $Q_r,\Lambda_r$ are the retained eigendata of the resulting raw trace Gram, define
\begin{equation}\label{eq:local-trace-riesz}
 \Phi_K=\Psi_KT_K,\qquad
 T_K=Q_r\Lambda_r^{-1/2},\qquad
 T_K^*G_{K,\rm raw}^{\rm tr}T_K=I.
\end{equation}
All subsequent assembly uses the retained trace-orthonormal space.  In floating point we report the condition number of the recomputed matrix $T_K^*G_{K,\rm raw}^{\rm tr}T_K$.  Thus $T_K$ is obtained explicitly from the retained eigenpairs of the local Gram matrix, not from an additional optimization problem.  If $p_K$ local functions are sampled at $Q_K$ trace quadrature points, forming the Gram matrix costs $O(Q_Kp_K^2)$ and its Hermitian eigendecomposition costs $O(p_K^3)$; both operations are element local and are performed once before global assembly.

\begin{algorithm}[t]
\caption{Local conditioning and global graph--Riesz solve}\label{alg:conditioned-solve}
\begin{algorithmic}[1]
\For{each element $K$}
  \State select the family $\Psi_K$ (PW, FB, or mixed) by \cref{alg:selector};
  \State scale every basis function by its local trace norm;
  \State diagonalize the equilibrated PW trace Gram matrix and discard eigenvectors with $\lambda<\tau_{\rm rank}\lambda_{\max}$;
  \State retain the equilibrated FB block unless it is numerically null;
  \State diagonalize the retained full hybrid trace Gram matrix, remove any cross-family eigenvectors below the same rank tolerance, and store the trace-Riesz map $T_K$.
\EndFor
\State assemble the Trefftz-DG system in the block-diagonal conditioned coordinates;
\State form $G_h=(K_h^*-K_h)/(2\ii)$, verify $\lambda_{\min}(G_h)>0$, compute a Cholesky factor $G_h=LL^*$, and set $B=L^{-*}$;
\State solve $(B^*K_hB)\widehat c=B^*f$ and back-transform.
\end{algorithmic}
\end{algorithm}

The transformations act on different objects: $T_K$ fixes the local representation; $B$ normalizes the assembled PWDG operator.

\Cref{tab:conditioned-pipeline} separates the local and global condition numbers.  The PW rows lose redundant directions before assembly; the FB rows retain their equilibrated dimensions.

\begin{table}[htbp]
\centering
\caption{Local and global conditioning at $\kappa=8$ on eight curved sectors.  $\kappa_{\rm tr,raw}$ is the worst raw local trace-Gram condition number; $\kappa_{\rm tr,final}$ is measured after trace orthonormalization; $\kappa_{\GR}$ belongs to the assembled graph--Riesz PWDG matrix.}
\label{tab:conditioned-pipeline}
\footnotesize
\begin{tabular}{rcrrrr}
\toprule
$p$ & $(q_{\PW},q_{\FB})$ & retained & $\kappa_{\rm tr,raw}$ & $\kappa_{\rm tr,final}$ & $\kappa_{\GR}$\\
\midrule
15 & $(15,0)$ & 104 & $9.6\times10^{13}$ & $1.000001$ & 3.12\\
15 & $(0,15)$ & 120 & $4.9\times10^{4}$ & $1.000000$ & 3.67\\
15 & $(7,8)$  & 120 & $2.8\times10^{10}$ & $1.000002$ & 3.63\\
27 & $(27,0)$ & 120 & $1.2\times10^{18}$ & $1.000001$ & 4.00\\
27 & $(0,27)$ & 216 & $8.9\times10^{14}$ & $1.000000$ & 7.93\\
27 & $(11,16)$& 168 & $1.4\times10^{19}$ & $1.000006$ & 5.37\\
\bottomrule
\end{tabular}
\end{table}

For any recovered PW candidate $Z_q$, admissibility means
\begin{equation}\label{eq:rank-admissible}
 r_{\rm eff}(G_q^{\PW}(Z_q);\varepsilon_{\rm solve})=q.
\end{equation}
For equispaced real angles the eigenvalues are given explicitly by \cref{thm:T2}.  The parameter $\varepsilon_{\rm solve}$ denotes the smallest relative trace scale that the subsequent numerical solve is intended to resolve, and $\tau_{\rm rank}$ is the relative eigenvalue threshold used in the local Gram matrices.  They are numerical-accuracy parameters, not physical parameters and not universal constants.  For clarity, let $\varepsilon_{\rm floor}:=\max\{\varepsilon_{\rm mach},\varepsilon_{\rm lin},\varepsilon_{\rm data}\}$ denote an estimated relative numerical floor, where $\varepsilon_{\rm lin}$ is the attained relative linear-algebra accuracy and $\varepsilon_{\rm data}$ is the relative uncertainty of the trace data.  A practical choice should satisfy $\varepsilon_{\rm solve}\gtrsim\varepsilon_{\rm floor}$.  The reported binary64 direct-solve experiments use $\tau_{\rm rank}=\varepsilon_{\rm solve}=10^{-12}$ unless a sensitivity sweep states otherwise.  The wavenumber and trace radius enter indirectly through the Gram spectrum and the resolvable modal window rather than through a separate prescribed scaling of $\varepsilon_{\rm solve}$.

\section{Direction identification and local optimization}\label{sec:optimization}
The hybrid identity \eqref{eq:hybrid-intro} turns local PW selection into a small exponential-fitting problem.  We first develop this local procedure and its stability, and only afterwards record the globally coupled residual formulation used as a cost comparison in \cref{sec:numerics}.

\subsection{Local modal variable projection}
The local nonlinear problem is the hybrid identity itself.  For unresolved modes $|m|>M$, define
\begin{equation}\label{eq:modal-matrices}
 W(Z)_{m,j}=e^{-\ii mz_j},\qquad A(Z)=D_\tau W(Z),\qquad y=D_\tau\gamma,
\end{equation}
where $D_\tau=\diag(\tau_m)$.  Then
\begin{equation}\label{eq:modal-varpro}
 F_{q,M}(Z,\bar Z)=\min_{c\in\C^q}\|y-A(Z)c\|_2^2.
\end{equation}
On a constant-rank stratum let $c=A^\dagger y$ and $r=y-Ac$.  The envelope theorem and $A_{\bar z_j}=0$ give the Wirtinger derivative
\begin{equation}\label{eq:modal-wirtinger}
 \partial_{\bar z_j}F_{q,M}
 =-\overline{c_j}\,(\partial_{z_j}a_j)^*r,
 \qquad
 (\partial_{z_j}a_j)_m=-\ii m\tau_m e^{-\ii mz_j},
\end{equation}
where $a_j$ is the $j$th column of $A$.  Coefficients are computed by QR or SVD, not by forming normal equations.  Thus each trial uses only small element-local dense linear algebra; the admissible strip $|\Im z_j|\le\eta_{\max}$ prevents arbitrarily growing evanescent PWs.

\subsection{Stability-aware local selection}\label{sec:selector}
For each element the selector returns an admissible $(q,M,Z_q)$.  It acts on equilibrated modal data and enforces trace rank before comparing approximation errors.

\paragraph{Modal recovery}
By \cref{cor:isometry} the stable quantities are $b_m=\langle u,\varphi_m/\tau_m\rangle_{\mathrm{tr}}$.  We use a consecutive modal window
\[
 \mathcal W=\{m_0,m_0+1,\ldots,m_0+N-1\}
\]
on which the conversion from equilibrated coefficients to $\gamma_m$ is resolved: $\tau_m$ is large enough that the estimated perturbation of $b_m/\tau_m$ remains below the requested modal accuracy.  The phase-corrected sequence $\gamma_m=\ii^{-m}b_m/\tau_m$ is analyzed for exponential structure.  Reindexing by $\widetilde\gamma_r=\gamma_{m_0+r}$ changes only the exponential coefficients, not the nodes $\zeta_j=e^{-\ii z_j}$, because $\zeta_j^{m_0}$ is absorbed into the corresponding coefficient.

On a containing circle the quantities $b_m$ are Fourier coefficients of the scaled Cauchy data.  With $Q$ trace samples, direct projection onto $N$ retained modes costs $O(QN)$ per element; with equispaced samples the Fourier part can be evaluated simultaneously by an FFT in $O(Q\log Q)$.  This trace extraction is performed once per selection update, not at every variable-projection trial.  When the containing-circle hypothesis fails, no modal inversion is attempted: the candidate spaces are compared directly by Cauchy least squares on the physical element boundary.

\paragraph{ESPRIT for sparse exponential content}
For a finite complex-ray field $u_{\rm ray}=\sum_{j=1}^{q_*}c_j\psi_{z_j}$, \eqref{eq:JA} gives
\begin{equation}\label{eq:esprit-sequence}
 \gamma_m=\sum_{j=1}^{q_*}c_j\zeta_j^m,
 \qquad \zeta_j=e^{-\ii z_j}\ne0.
\end{equation}
Thus propagation ($\Im z_j=0$) corresponds to $|\zeta_j|=1$, whereas an evanescent PW has $|\zeta_j|\ne1$.  For the reindexed window $\widetilde\gamma_0,\ldots,\widetilde\gamma_{N-1}$ we use, unless stated otherwise, the balanced dimensions
\[
 L=\left\lceil\frac{N+1}{2}\right\rceil,\qquad K=N-L+1,
\]
so that $L+K-1=N$.  A requested rank $q$ is admissible only when $L\ge q+1$ and $K\ge q$.  We form
\begin{equation}\label{eq:esprit-H}
 H=(\widetilde\gamma_{r+s})_{r=0,\ldots,L-1}^{s=0,\ldots,K-1}.
\end{equation}
The dominant $q$-dimensional left singular subspace is retained only when its $q$th singular value is above the corresponding data-accuracy floor.  If $U\in\C^{L\times q}$ spans that subspace and $U_0,U_1$ delete its last and first rows, respectively, set
\begin{equation}\label{eq:esprit-S}
 S=U_0^\dagger U_1.
\end{equation}

\begin{theorem}[Exact target-frequency recovery]\label{thm:esprit-exact}
Assume \eqref{eq:esprit-sequence} has exactly $q$ nonzero coefficients and pairwise distinct nodes $\zeta_j$.  If $L\ge q+1$ and $K\ge q$, then $\rank H=q$, $U_0$ has full column rank, and
\begin{equation}\label{eq:esprit-similarity}
 S=T^{-1}\diag(\zeta_1,\ldots,\zeta_q)T
\end{equation}
for a nonsingular $T$.  Hence ESPRIT recovers every $\zeta_j$ exactly and the complex angles follow from
\begin{equation}\label{eq:esprit-angle-recovery}
 z_j=\ii\operatorname{Log}\zeta_j\pmod{2\pi},
\end{equation}
with a fixed logarithm branch.
\end{theorem}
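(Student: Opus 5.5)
The plan is the classical Vandermonde factorization argument for ESPRIT. By \eqref{eq:esprit-sequence} and the reindexing remark, $\widetilde\gamma_r=\sum_{j=1}^q\widetilde c_j\zeta_j^r$ with $\widetilde c_j=c_j\zeta_j^{m_0}\ne0$, since $\zeta_j\ne0$. Let $V_L\in\C^{L\times q}$ be given by $(V_L)_{r,j}=\zeta_j^r$, and define $V_K$ in the same way. Then
\[
 H=V_L\,\diag(\widetilde c)\,V_K^{\mathsf T}.
\]
Because the nodes are pairwise distinct, any $q$ consecutive rows of a Vandermonde matrix form a nonsingular square Vandermonde matrix. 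Since $L\ge q+1$ and $K\ge q$, both $V_L$ and $V_K$ have full column rank $q$, and $\diag(\widetilde c)$ is invertible. Hence $\rank H=q$ and $\operatorname{range}H=\operatorname{range}V_L$.

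Since $\rank H=q$, the dominant $q$-dimensional left singular subspace is exactly $\operatorname{range}H$. In exact arithmetic the $q$th singular value is positive and the $(q+1)$st vanishes, so the retained subspace is well defined. Therefore $U=V_LT^{-1}$ for some nonsingular $T\in\C^{q\times q}$, that is, $V_L=UT$.

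Next I use the shift invariance of Vandermonde matrices. Deleting the last row of $V_L$ gives $V_{L-1}$, and deleting the first row gives $V_{L-1}\Lambda$ with $\Lambda=\diag(\zeta_1,\ldots,\zeta_q)$. Thus $U_0T=V_{L-1}$ and $U_1T=V_{L-1}\Lambda$. The matrix $V_{L-1}$ has $L-1\ge q$ rows and distinct nodes, so it has full column rank, and hence so does $U_0=V_{L-1}T^{-1}$. This is the only place where the condition $L\ge q+1$ is genuinely needed, and I expect it to be the main point to check carefully. Full column rank gives $U_0^\dagger U_0=I$, so
\[
 S=U_0^\dagger U_1=U_0^\dagger V_{L-1}\Lambda T^{-1}=U_0^\dagger U_0\,T\Lambda T^{-1}=T\Lambda T^{-1}.
\]
Renaming $T\mapsto T^{-1}$ gives \eqref{eq:esprit-similarity}.

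It follows that the eigenvalues of $S$ are exactly the nodes $\zeta_j$. Since $\zeta_j=e^{-\ii z_j}$, we have $-\ii z_j\in\operatorname{Log}\zeta_j+2\pi\ii\mathbb Z$, so $z_j=\ii\operatorname{Log}\zeta_j$ modulo $2\pi$. This is well defined on a fixed branch because $\zeta_j\ne0$, and it determines $\Re z_j$ modulo $2\pi$ and $\Im z_j$ exactly. The labeling of the recovered nodes is fixed only up to a permutation, which $T$ absorbs.
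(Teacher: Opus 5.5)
Your proof is correct and follows the paper's own argument: the Vandermonde factorization $H=V_LCV_K^T$, the identification of $U$ with $V_L$ up to a nonsingular change of basis, and the shift relation $V_1=V_0\diag(\zeta_1,\ldots,\zeta_q)$ combined with full column rank of the shifted block. The only differences are bookkeeping: you absorb $\zeta_j^{m_0}$ into the coefficients explicitly, which the paper leaves implicit, and you write $U=V_LT^{-1}$ and then rename $T\mapsto T^{-1}$ where the paper writes $U=V_LT$.
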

\begin{proof}
Let $(V_L)_{rj}=\zeta_j^r$, $(V_K)_{sj}=\zeta_j^s$, and $C=\diag(c_j)$.  Then
\begin{equation}\label{eq:esprit-factorization}
 H=V_LCV_K^T.
\end{equation}
Distinct nodes and nonzero coefficients give $\rank H=q$ and $U=V_LT$ for some nonsingular $T$.  If $V_0,V_1$ are the shifted row blocks of $V_L$, then
\begin{equation}\label{eq:vandermonde-shift}
 V_1=V_0\diag(\zeta_1,\ldots,\zeta_q).
\end{equation}
Since $V_0$ has full column rank, multiplication by $U_0^\dagger$ yields \eqref{eq:esprit-similarity}.
\end{proof}

\begin{theorem}[Perturbation of ESPRIT nodes and complex angles]\label{thm:esprit-stability}
Let $\widetilde H=H+E$ and construct $\widetilde S$ from the dominant $q$-dimensional left singular subspace of $\widetilde H$.  Put
\[
 \sigma_*:=\sigma_q(H)>0,
 \qquad s_*:=\sigma_q(U_0)>0.
\]
For sufficiently small $\|E\|_2$, the eigenvalues $\widetilde\zeta_j$ of $\widetilde S$ can be labeled so that
\begin{equation}\label{eq:esprit-stability-bound}
 \max_j|\widetilde\zeta_j-\zeta_j|\le C_{\rm E}\|E\|_2,
\end{equation}
where $C_{\rm E}$ depends on $\sigma_*^{-1}$, $s_*^{-1}$, and $\kappa_2(T)$.  If the exact nodes lie in a compact annulus that avoids the chosen logarithm cut, then
\begin{equation}\label{eq:esprit-angle-stability}
 \max_j|\widetilde z_j-z_j|
 \le C_{\log}C_{\rm E}\|E\|_2,
 \qquad \widetilde z_j=\ii\operatorname{Log}\widetilde\zeta_j.
\end{equation}
\end{theorem}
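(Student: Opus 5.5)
The argument has three perturbation steps: subspace, then the ESPRIT matrix, then eigenvalues. The exact structure comes from \cref{thm:esprit-exact}: $H=V_LCV_K^T$ has rank $q$, $U=V_LT$, and $S=U_0^\dagger U_1=T^{-1}\diag(\zeta_j)T$.

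\textbf{Subspace step.} Let $\widetilde U$ span the dominant $q$-dimensional left singular subspace of $\widetilde H$. The exact $H$ has rank $q$, so $\sigma_{q+1}(H)=0$, and the gap between retained and discarded singular values is $\sigma_*$. Wedin's $\sin\Theta$ theorem then gives, for $\|E\|_2\le\sigma_*/2$,
\[
\|\sin\Theta(\widetilde U,U)\|_2\le 2\|E\|_2/\sigma_* .
\]
Hence there is a unitary $Q\in\C^{q\times q}$, obtained from the orthogonal Procrustes alignment, such that $\|\widetilde UQ-U\|_2\le c\|E\|_2/\sigma_*$ with an absolute constant $c$. Replacing $\widetilde U$ by $\widetilde UQ$ changes $\widetilde S$ only by the similarity $Q^*\widetilde SQ$, which leaves its eigenvalues unchanged. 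So we may assume $\|\widetilde U-U\|_2\le c\|E\|_2/\sigma_*$. Row deletion does not increase the spectral norm, so the same bound holds for $\widetilde U_0-U_0$ and for $\widetilde U_1-U_1$.

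\textbf{Step 2: pseudoinverse.} $U_0$ has full column rank with $\sigma_q(U_0)=s_*$. By Weyl's inequality, once $\|\widetilde U_0-U_0\|\le s_*/2$ we have $\sigma_q(\widetilde U_0)\ge s_*/2$. The standard full-column-rank pseudoinverse perturbation bound, $\|A^\dagger-B^\dagger\|\le\sqrt2\|A^\dagger\|\|B^\dagger\|\|A-B\|$, then gives $\|\widetilde U_0^\dagger-U_0^\dagger\|\lesssim s_*^{-2}\|E\|/\sigma_*$. Using $\|U_1\|\le1$, we obtain
\[
\|\widetilde S-S\|_2\le \|\widetilde U_0^\dagger\|\|\widetilde U_1-U_1\|+\|\widetilde U_0^\dagger-U_0^\dagger\|\|U_1\|\lesssim (s_*^{-1}+s_*^{-2})\sigma_*^{-1}\|E\|_2 .
\]

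\textbf{Step 3: eigenvalues.} $S$ is diagonalizable by $T$, so Bauer--Fike places each eigenvalue of $\widetilde S$ within $\kappa_2(T)\|\widetilde S-S\|$ of some $\zeta_j$. Bauer--Fike alone does not give a one-to-one labeling. To get one, take $\|E\|$ small enough that $\kappa_2(T)\|\widetilde S-S\|<\tfrac12\min_{i\ne j}|\zeta_i-\zeta_j|$; the nodes are distinct, so this minimum is positive. The Bauer--Fike disks are then disjoint. A continuity (homotopy) argument along $S+t(\widetilde S-S)$, $t\in[0,1]$, shows that each disk contains exactly one eigenvalue. This gives the bound with $C_{\rm E}=c'\kappa_2(T)(s_*^{-1}+s_*^{-2})\sigma_*^{-1}$. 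I expect this labeling, together with keeping track of the unitary alignment in the subspace step, to be the main technical point. The "sufficiently small" hypothesis absorbs all the smallness conditions: $\|E\|\le\sigma_*/2$, the $s_*/2$ condition, and the node-separation condition.

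\textbf{Angles.} Let $\mathcal A$ be the compact annulus containing the exact nodes; it stays off the logarithm cut. Let $\mathcal A_\delta$ be a closed neighbourhood of $\mathcal A$ that still avoids the cut and $0$; shrink $\|E\|$ further so that every $\widetilde\zeta_j$ lies in $\mathcal A_\delta$. If $\mathcal A_\delta$ contains no point of the cut, the principal branch $\operatorname{Log}$ is holomorphic on a neighbourhood of it. $\operatorname{Log}$ is Lipschitz on $\mathcal A_\delta$, so
\[
|\widetilde z_j-z_j|=|\operatorname{Log}\widetilde\zeta_j-\operatorname{Log}\zeta_j|\le C_{\log}|\widetilde\zeta_j-\zeta_j| .
\]
A suitable constant is $C_{\log}=1/\min_{\mathcal A_\delta}|\zeta|$, after checking that segments between nearby points do not cross the cut, or else using a local path in $\mathcal A_\delta$. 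Combining this with the node bound gives \eqref{eq:esprit-angle-stability}.
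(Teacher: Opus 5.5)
Your proposal is correct and follows the same route as the paper's proof. The paper's steps are Wedin's bound with a unitary alignment $Q$, local Lipschitz continuity of the shifted pseudoinverse (using $s_*>0$) to bound $\|Q^*\widetilde SQ-S\|_2$, Bauer--Fike applied to $S=T^{-1}\diag(\zeta_j)T$, and local Lipschitz continuity of $\operatorname{Log}$ away from $0$ and the cut; your explicit constant $C_{\rm E}\sim\kappa_2(T)(s_*^{-1}+s_*^{-2})\sigma_*^{-1}$ and your disjoint-disk continuity argument for the one-to-one labeling supply details the paper leaves implicit.
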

\begin{proof}
Wedin's singular-subspace bound gives $\|\widetilde UQ-U\|_2\le C\|E\|_2/\sigma_*$ for a unitary $Q$.  Since $s_*>0$, the shifted pseudoinverse is locally Lipschitz, hence $\|Q^*\widetilde SQ-S\|_2\le C'\|E\|_2$.  Bauer--Fike applied to \eqref{eq:esprit-similarity} gives \eqref{eq:esprit-stability-bound}.  The logarithm is locally Lipschitz on any compact set separated from zero and its branch cut, which gives \eqref{eq:esprit-angle-stability}.
\end{proof}

\begin{corollary}[Consistency inside the hybrid selector]\label{cor:esprit-hybrid}
If
\begin{equation}\label{eq:esprit-hybrid-pert}
 \gamma_m^{(M)}=\sum_{j=1}^qc_j\zeta_j^m+r_m^{(M)}
\end{equation}
with distinct nonzero $\zeta_j$, and the Hankel matrix generated by $r_m^{(M)}$ tends to zero in spectral norm, then the recovered nodes and complex angles converge, up to permutation, to the exact ones.  The same conclusion holds with an additional vanishing trace-discretization perturbation.
\end{corollary}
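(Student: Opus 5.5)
The plan is to reduce the corollary to \cref{thm:esprit-exact,thm:esprit-stability} by treating the residual Hankel matrix as the perturbation $E$. Fix the reindexed window $\mathcal W$ and the balanced dimensions $L,K$ with $L\ge q+1$, $K\ge q$. Hankel assembly \eqref{eq:esprit-H} is linear in the sequence, so \eqref{eq:esprit-hybrid-pert} gives
\[
 H^{(M)}=H+E^{(M)},
\]
where $H$ is the Hankel matrix of the exact sum $\sum_j c_j\zeta_j^m$ and $E^{(M)}$ is the Hankel matrix of $r_m^{(M)}$. By hypothesis, $\|E^{(M)}\|_2\to0$.

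\textbf{Step 1: apply the exact theorem.} The exact part has distinct nonzero nodes and (after discarding zero coefficients, which we may assume absent since the statement is about $q$ terms) nonzero coefficients. \Cref{thm:esprit-exact} then gives $\rank H=q$, $\sigma_*=\sigma_q(H)>0$, and $s_*=\sigma_q(U_0)>0$. It also gives the similarity \eqref{eq:esprit-similarity} with a fixed nonsingular $T$. These constants do not depend on $M$, because the exact sum is $M$-independent. The reindexing by $m_0$ only rescales the coefficients by $\zeta_j^{m_0}\neq0$, so this rescaling is harmless.

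\textbf{Step 2: apply the perturbation theorem.} Since $\|E^{(M)}\|_2\to0$, for all sufficiently large $M$ the smallness hypothesis of \cref{thm:esprit-stability} holds. In particular, Weyl's inequality gives $\sigma_q(H+E^{(M)})\ge\sigma_*-\|E^{(M)}\|_2>0$ and $\sigma_{q+1}(H+E^{(M)})\le\|E^{(M)}\|_2$. Hence the dominant $q$-dimensional subspace is well defined and is eventually selected above the data floor. \Cref{thm:esprit-stability} then gives a labeling with
\[
 \max_j|\widetilde\zeta_j^{(M)}-\zeta_j|\le C_{\rm E}\|E^{(M)}\|_2\to0 .
\]
The labeling may depend on $M$, which is why the conclusion holds only up to permutation. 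For the angles, the exact nodes are finitely many nonzero points. After rotating the branch cut if necessary, they lie in a compact annulus avoiding the cut, and \eqref{eq:esprit-angle-stability} gives convergence of $\widetilde z_j$ to $z_j$ modulo $2\pi$.

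\textbf{Step 3: add the discretization error.} For the additional trace-discretization perturbation, write the observed Hankel matrix as $H+E^{(M)}+E_{\rm disc}$. The triangle inequality bounds the total perturbation by $\|E^{(M)}\|_2+\|E_{\rm disc}\|_2\to0$, and Step 2 then applies verbatim.

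The main obstacle is the uniformity implicit in ``for sufficiently small $\|E\|_2$'' in \cref{thm:esprit-stability}. The threshold and $C_{\rm E}$ must be fixed by the unperturbed data $(\sigma_*,s_*,\kappa_2(T))$ alone. To secure this, I would note that the Wedin bound and the local Lipschitz continuity of $U_0\mapsto U_0^\dagger$ hold on a neighbourhood whose radius is controlled by $\sigma_*$ and $s_*$ (for example $\|E\|_2\le\sigma_*/2$, which keeps $\sigma_q(\widetilde U_0)\ge s_*/2$). This gives a single radius valid for all large $M$.
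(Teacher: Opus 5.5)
Your proposal is correct and follows the paper's proof, which consists of the single step of applying \cref{thm:esprit-stability} to the total Hankel perturbation. Your uniformity concern is settled simply by the $M$-independence of the exact data on the fixed window. One correction to your illustrative radius: $\|E\|_2\le\sigma_*/2$ alone does not force $\sigma_q(\widetilde U_0)\ge s_*/2$, since Wedin only gives $\|\widetilde UQ-U\|_2\lesssim\|E\|_2/\sigma_*$; you need a radius of order $\sigma_*s_*$, and for the Bauer--Fike labeling also $\|E\|_2$ small relative to the node separation divided by $\kappa_2(T)$.
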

\begin{proof}
Apply \cref{thm:esprit-stability} to the total Hankel perturbation.
\end{proof}

\begin{lemma}[Complex-angle Lipschitz bounds]\label{lem:pw-direction-lipschitz}
Let $K\subset B(x_K,h_K)$ and $|\Im z|,|\Im w|\le\eta_{\max}$.  Define
\[
 D_\eta=\sqrt{\cosh(2\eta_{\max})},
 \qquad E_K=e^{\kappa h_K\sinh\eta_{\max}},
\]
and introduce the augmented element-boundary norm
\begin{equation}\label{eq:boundary-plus-norm}
 \|v\|_{\partial K,+}^2
 :=\int_{\partial K}\big(\kappa|v|^2+\kappa^{-1}|\nabla v|^2\big)\,ds.
\end{equation}
Then
\begin{align}
 \|\psi_z^K-\psi_w^K\|_{H^1(K)}
 &\le C_{H^1,K}|z-w|,\label{eq:pw-H1-lipschitz}\\
 \|\psi_z^K-\psi_w^K\|_{\mathrm{tr},K}
 &\le C_{\mathrm{tr},K}|z-w|,\label{eq:pw-tr-lipschitz}\\
 \|\psi_z^K-\psi_w^K\|_{\partial K,+}
 &\le C_{\partial,K}|z-w|,\label{eq:pw-boundary-lipschitz}
\end{align}
where one may take
\begin{align}
 C_{H^1,K}
 &=|K|^{1/2}\kappa D_\eta E_K
 \Big(h_K^2+(1+\kappa h_KD_\eta)^2\Big)^{1/2},\label{eq:CH1K}\\
 C_{\mathrm{tr},K}
 &=\Big(2\pi h_K\kappa D_\eta^2E_K^2
 [ (\kappa h_K)^2+(1+\kappa h_KD_\eta)^2]\Big)^{1/2},\label{eq:CtrK}\\
 C_{\partial,K}
 &=\Big(|\partial K|\,\kappa D_\eta^2E_K^2
 [ (\kappa h_K)^2+(1+\kappa h_KD_\eta)^2]\Big)^{1/2}.\label{eq:CboundaryK}
\end{align}
For $\eta_{\max}=0$ these reduce to the corresponding real-angle bounds.  The factor $E_K=e^{\kappa h_K\sinh\eta_{\max}}$ is intrinsic: strongly evanescent directions are exponentially amplified when continued across a patch of radius comparable with $h_K$.  This is the reason for imposing a bounded complex-angle strip rather than allowing arbitrarily large $|\Im z|$.
\end{lemma}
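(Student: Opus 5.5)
The plan is to use the mean value theorem along the straight segment $\zeta(t)=w+t(z-w)$, $t\in[0,1]$, which stays in the strip $|\Im\zeta|\le\eta_{\max}$ because the strip is convex. For each $x$ we then have
\[
 \psi_z(x)-\psi_w(x)=(z-w)\int_0^1\partial_\zeta\psi_\zeta(x)\big|_{\zeta=\zeta(t)}\,dt,
\]
and the analogous identity for $\nabla\psi$. Every pointwise bound on the $\zeta$-derivative that is uniform in the strip and on $\overline{B(x_K,h_K)}$ then yields all three Lipschitz estimates. We only integrate the squared bounds over $K$, $\partial B(x_K,h_K)$ (length $2\pi h_K$) or $\partial K$ (length $|\partial K|$).

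The pointwise bounds come from two elementary facts. First, with $\zeta=\theta+\ii\eta$, we have $|d(\zeta)|^2=\cosh^2\eta+\sinh^2\eta=\cosh 2\eta\le D_\eta^2$. The same bound holds for $d'(\zeta)=(-\sin\zeta,\cos\zeta)$, since $|\sin\zeta|^2+|\cos\zeta|^2=\cosh 2\eta$. Second, by \eqref{eq:evanescent-factor} and $|x-x_K|\le h_K$, $|\psi_\zeta(x)|=e^{-\kappa\sinh\eta\,t_\theta^\perp\cdot(x-x_K)}\le E_K$. Writing $y=x-x_K$, we then compute
\[
 \partial_\zeta\psi_\zeta=\ii\kappa\,d'(\zeta)\cdot y\,\psi_\zeta,\qquad
 \partial_\zeta\nabla\psi_\zeta=\ii\kappa\big(d'(\zeta)+\ii\kappa\,(d'(\zeta)\cdot y)\,d(\zeta)\big)\psi_\zeta .
\]
This gives $|\partial_\zeta\psi_\zeta|\le\kappa h_KD_\eta E_K$ and $|\partial_\zeta\nabla\psi_\zeta|\le\kappa D_\eta E_K(1+\kappa h_KD_\eta)$. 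Summing the weighted squares reproduces the stated constants. For the $H^1$ bound, $\kappa^2D_\eta^2E_K^2\big(h_K^2+(1+\kappa h_KD_\eta)^2\big)$ times $|K|$ gives \eqref{eq:CH1K}. For the trace bound, the weights $\kappa$ and $\kappa^{-1}$ give $\kappa D_\eta^2E_K^2\big((\kappa h_K)^2+(1+\kappa h_KD_\eta)^2\big)$, times $2\pi h_K$ on the circle, which is \eqref{eq:CtrK}. The normal derivative is bounded by the full gradient there. Using $|\partial K|$ instead gives \eqref{eq:CboundaryK}. Finally, Minkowski's integral inequality moves the $t$-integral outside the norms.

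The main point needing care is the uniform bound on $|\psi_\zeta|$. It requires the whole segment to stay in the strip (convexity) and all evaluation points to lie in $\overline{B(x_K,h_K)}$. This is where $K\subset B(x_K,h_K)$ is used, and why the trace norm is taken on the containing circle of radius $h_K$. Setting $\eta_{\max}=0$ gives $D_\eta=E_K=1$, which recovers the real-angle bounds.
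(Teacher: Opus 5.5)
Your proposal is correct and follows essentially the same argument as the paper. Both differentiate $\psi_\zeta$ and $\nabla\psi_\zeta$ in the complex angle, bound $\|d(\zeta)\|_2,\|d'(\zeta)\|_2\le D_\eta$ and $|\psi_\zeta|\le E_K$ uniformly on the disk, integrate along the segment from $w$ to $z$, and then integrate the squared pointwise bounds over $K$, $\partial B(x_K,h_K)$ and $\partial K$, using $|\partial_n v|\le|\nabla v|$ on the circle; your explicit verification that $|d(\zeta)|^2=\cosh 2\eta$ and your use of the convexity of the strip and the closed disk supply details the paper leaves implicit.
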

\begin{proof}
Along the segment joining $z$ and $w$,
\[
 \partial_z\psi_z=\ii\kappa d'(z)\cdot(x-x_K)\psi_z,
 \qquad
 \partial_z\nabla\psi_z=\ii\kappa d'(z)\psi_z+\ii\kappa d(z)\partial_z\psi_z.
\]
On the strip, $\|d(z)\|_2,\|d'(z)\|_2\le D_\eta$ and $|\psi_z|\le E_K$.  Hence, uniformly for $x\in B(x_K,h_K)$,
\[
 |\partial_z\psi_z|\le \kappa D_\eta h_KE_K,
 \qquad
 |\partial_z\nabla\psi_z|\le \kappa D_\eta E_K(1+\kappa h_KD_\eta).
\]
Integration along the segment in the complex-angle plane gives the pointwise difference bounds.  Integration over $K$, over $\partial B(x_K,h_K)$, and over $\partial K$, respectively, yields \eqref{eq:pw-H1-lipschitz}--\eqref{eq:pw-boundary-lipschitz}.  On the circle, $|\partial_n v|\le|\nabla v|$, which gives \eqref{eq:CtrK}.
\end{proof}

For the bridge estimate we use the standard PWDG continuity norm
\begin{align}\label{eq:DGplus-def}
 \|v\|_{\mathrm{DG}+}^2
 :={}&-\Im\mathscr A_h(v,v)
 +\kappa^{-1}\|\avg{\nabla v}\|_{0,\FhI}^2
 +\kappa\|\avg v\|_{0,\FhI}^2
 +\kappa^{-1}\|\partial_n v\|_{0,\Gamma_D}^2,
\end{align}
with the finite-dimensional DtN trace terms added on $\Gamma_R$ when \eqref{eq:dtn-form} is used.  The face inequalities for jumps and averages imply that, for piecewise Trefftz $v$,
\begin{equation}\label{eq:DGplus-boundary-control}
 \|v\|_{\mathrm{DG}+}^2
 \le C_{\rm face}\sum_{K\in\Th}\|v\|_{\partial K,+}^2,
\end{equation}
where $C_{\rm face}$ depends only on the fixed flux parameters and face multiplicity; for a fixed truncated DtN map it also contains its finite-dimensional trace-operator bound.  This is the only mesh-level constant needed below.

\begin{theorem}[From modal perturbation to PWDG error]\label{thm:esprit-pwdg-bridge}
Suppose
\begin{equation}\label{eq:local-finite-ray-bridge}
 u|_K=\sum_{j=1}^{q_K}c_{Kj}\psi_{z_{Kj}}^K,
 \qquad |\Im z_{Kj}|\le\eta_{\max},
\end{equation}
and let the recovered angles $\widehat z_{Kj}$ lie in the same strip.  Set
\[
 \delta_K^2=\sum_{j=1}^{q_K}|\widehat z_{Kj}-z_{Kj}|^2.
\]
If $V_h(\widehat Z)$ contains the corresponding recovered plane waves, then
\begin{align}
 \inf_{v_h\in V_h(\widehat Z)}
 \Big(\sum_K\|u-v_h\|_{H^1(K)}^2\Big)^{1/2}
 &\le\Big(\sum_K C_{H^1,K}^2\|c_K\|_2^2\delta_K^2\Big)^{1/2},\label{eq:bridge-H1}\\
 \inf_{v_h\in V_h(\widehat Z)}
 \Big(\sum_K\|u-v_h\|_{\mathrm{tr},K}^2\Big)^{1/2}
 &\le\Big(\sum_K C_{\mathrm{tr},K}^2\|c_K\|_2^2\delta_K^2\Big)^{1/2}.\label{eq:bridge-tr}
\end{align}
If the standard PWDG quasi-optimality estimate
\begin{equation}\label{eq:pwdg-qo-assumption}
 \|u-u_h\|_{\mathrm{DG}}
 \le C_{\mathrm{qo}}\inf_{v_h\in V_h(\widehat Z)}\|u-v_h\|_{\mathrm{DG}+}
\end{equation}
holds for the fixed flux configuration, then
\begin{equation}\label{eq:bridge-DG-angle}
 \|u-u_h(\widehat Z)\|_{\mathrm{DG}}
 \le C_{\mathrm{qo}}C_{\rm face}^{1/2}
 \Big(\sum_K C_{\partial,K}^2\|c_K\|_2^2\delta_K^2\Big)^{1/2}.
\end{equation}
\end{theorem}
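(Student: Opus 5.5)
The plan is to build one explicit competitor in $V_h(\widehat Z)$ and estimate its error element by element with \cref{lem:pw-direction-lipschitz}. Since the infimum is bounded by any admissible choice, the natural candidate reuses the exact coefficients on the recovered angles:
\[
 v_h|_K:=\sum_{j=1}^{q_K}c_{Kj}\psi^K_{\widehat z_{Kj}}.
\]
This lies in $V_h(\widehat Z)$ by hypothesis, because $V_h(\widehat Z)$ contains the recovered plane waves on each $K$. Nothing forces the same choice on other elements: $V_h$ is a broken space, so the elementwise definitions are independent. Then
\[
 u-v_h=\sum_j c_{Kj}\bigl(\psi^K_{z_{Kj}}-\psi^K_{\widehat z_{Kj}}\bigr)\quad\text{on } K.
\]

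For the local bounds, let $\|\cdot\|_\ast$ be any one of $\|\cdot\|_{H^1(K)}$, $\|\cdot\|_{\mathrm{tr},K}$ or $\|\cdot\|_{\partial K,+}$. The triangle inequality and \cref{lem:pw-direction-lipschitz} give
\[
 \|u-v_h\|_\ast\le C_\ast\sum_j|c_{Kj}|\,|\widehat z_{Kj}-z_{Kj}|.
\]
The lemma applies because both $z_{Kj}$ and $\widehat z_{Kj}$ lie in the strip $|\Im z|\le\eta_{\max}$, and the strip is convex, so the segment between them stays inside it. Cauchy--Schwarz in $j$ then bounds the sum by $C_\ast\|c_K\|_2\delta_K$. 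Squaring and summing over $K$ yields \eqref{eq:bridge-H1} and \eqref{eq:bridge-tr} directly. For the trace estimate, the trace norm is the one on $\partial B(x_K,h_K)$, so the field is evaluated off $K$. This is legitimate because plane waves are entire functions, and \eqref{eq:local-finite-ray-bridge} is read as the plane-wave extension, exactly as the lemma's constants presume.

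For the DG estimate, I chain \eqref{eq:pwdg-qo-assumption} with \eqref{eq:DGplus-boundary-control}:
\[
 \|u-u_h\|_{\mathrm{DG}}\le C_{\mathrm{qo}}\|u-v_h\|_{\mathrm{DG}+}\le C_{\mathrm{qo}}C_{\rm face}^{1/2}\Big(\sum_K\|u-v_h\|_{\partial K,+}^2\Big)^{1/2}.
\]
Each summand is then bounded by $C_{\partial,K}\|c_K\|_2\delta_K$, using the boundary Lipschitz bound \eqref{eq:pw-boundary-lipschitz}.

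The main obstacle is justifying \eqref{eq:DGplus-boundary-control} for $u-v_h$. That inequality is stated for piecewise Trefftz functions, and $u-v_h$ qualifies elementwise. Its use also requires that the $\mathrm{DG}+$ norm contain only face traces: jumps, averages, and boundary and DtN terms. These are controlled by the one-sided traces $v^\pm$ and $\nabla v^\pm$ through $|a\pm b|^2\le2(|a|^2+|b|^2)$. In this skeleton-only Trefftz setting no volume terms appear, so this holds. I would check this term by term in \eqref{eq:graph-identity} and \eqref{eq:DGplus-def}, noting that normal derivatives are bounded by $|\nabla v|$, which is why the augmented norm \eqref{eq:boundary-plus-norm} is used. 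With that check, the remaining steps are just the triangle and Cauchy--Schwarz inequalities.
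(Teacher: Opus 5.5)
Your proposal is correct and matches the paper's proof: the same comparison function $v_K=\sum_j c_{Kj}\psi^K_{\widehat z_{Kj}}$, then \cref{lem:pw-direction-lipschitz} with the triangle inequality and Cauchy--Schwarz in the ray index, then the chain \eqref{eq:pwdg-qo-assumption}, \eqref{eq:DGplus-boundary-control}, \eqref{eq:pw-boundary-lipschitz}. Your extra remarks on convexity of the strip and on the plane-wave extension to $\partial B(x_K,h_K)$ are sound; the one imprecision is that the truncated DtN terms are bounded by the finite-dimensional operator bound of $\DtN_N$, not by $|a\pm b|^2\le 2(|a|^2+|b|^2)$, and the paper simply absorbs this bound into $C_{\rm face}$ in the standing inequality \eqref{eq:DGplus-boundary-control}.
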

\begin{proof}
Use the comparison function $v_K=\sum_jc_{Kj}\psi_{\widehat z_{Kj}}^K$.  Applying \cref{lem:pw-direction-lipschitz} to each summand and then Cauchy--Schwarz in the ray index gives \eqref{eq:bridge-H1} and \eqref{eq:bridge-tr}; the same argument with \eqref{eq:pw-boundary-lipschitz} yields
\[
 \sum_K\|u-v_h\|_{\partial K,+}^2
 \le \sum_K C_{\partial,K}^2\|c_K\|_2^2\delta_K^2.
\]
Combine this inequality with \eqref{eq:DGplus-boundary-control} and \eqref{eq:pwdg-qo-assumption} to obtain \eqref{eq:bridge-DG-angle}.
\end{proof}

The estimate inherits the stability constant $C_{\rm qo}$ of the underlying PWDG formulation.  No uniformity of this constant is asserted near interior resonances, on strongly under-resolved meshes, or in coefficient-contrast regimes outside the hypotheses of the corresponding PWDG stability theory; local direction recovery cannot compensate for loss of stability of the continuous or discrete boundary-value problem.

Combining \eqref{eq:bridge-DG-angle} with \cref{thm:esprit-stability} gives the data-to-DG estimate
\begin{equation}\label{eq:bridge-DG-hankel}
 \|u-u_h(\widehat Z)\|_{\mathrm{DG}}
 \le C_{\mathrm{qo}}C_{\rm face}^{1/2}
 \Big(\sum_K q_K C_{\partial,K}^2\|c_K\|_2^2
 (C_{\log,K}C_{{\rm E},K}\|E_K\|_2)^2\Big)^{1/2}.
\end{equation}

\begin{remark}[General fields]\label{rem:bridge-general}
If $u=u_{\rm ray}+r$ and $u_{\rm ray}$ has the form \eqref{eq:local-finite-ray-bridge}, the same argument gives
\[
 \|u-u_h(\widehat Z)\|_{\mathrm{DG}}
 \le C_{\mathrm{qo}}\left(E_{\rm best}
 +C_{\rm face}^{1/2}
 \Big(\sum_K C_{\partial,K}^2\|c_K\|_2^2\delta_K^2\Big)^{1/2}\right),
\]
where $E_{\rm best}$ is the best $\mathrm{DG}+$ approximation of the nondirectional remainder.  The finite-ray case has $E_{\rm best}=0$.
\end{remark}

\paragraph{Stability filter and score}
Approximation is compared only after the recovered candidate is shown to be numerically realizable.  For a candidate $Z_q$ define
\begin{equation}\label{eq:admissibleq}
 \mathcal A_q(Z_q):\quad
 \max_j|\Im z_j|\le\eta_{\max},\qquad
 r_{\rm eff}(G_q^{\PW}(Z_q);\varepsilon_{\rm solve})=q.
\end{equation}
The explicit equispaced spectrum \eqref{eq:T2-eigs} remains a cheap capacity estimate for real PW sweeps; the actual selector uses the trace Gram matrix of the recovered real or complex angles.

\begin{algorithm}[H]
\caption{Stability-aware local PW--FB selection}\label{alg:selector}
\begin{algorithmic}[1]
\State compute equilibrated modal data $b_m$ on a resolved window and fix a local budget $p$;
\State set $\mathcal Q(p)=\{p\}\cup\{q:0\le q<p,\ p-q=2M+1,\ M\in\mathbb N_0\}$;
\For{$q\in\mathcal Q(p)$}
 \State if $q>0$, recover complex angles $Z_q$ by ESPRIT and polish \eqref{eq:modal-varpro} subject to $|\Im z_j|\le\eta_{\max}$;
 \State reject $Z_q$ unless \eqref{eq:admissibleq} holds;
 \State if $q=p$, evaluate the pure-PW trace error; otherwise set $M=(p-q-1)/2$ and form the full equilibrated PW--FB trace Gram matrix;
 \State reject a mixed candidate of effective rank $<p$; otherwise evaluate \eqref{eq:T3};
\EndFor
\State return the admissible $(q,M,Z_q)$ of minimum trace error.
\end{algorithmic}
\end{algorithm}

\subsection{Practical resolution and parameter choice}\label{sec:practical-resolution}
The selector uses only information that is numerically resolved.  For a candidate PW rank $q$, the modal window must be long enough for the shifted ESPRIT system and must remain above the trace-data floor.  With the balanced choice above, an odd window of length $N$ permits at most $q\le(N-1)/2$.  If no resolved window is long enough for a proposed $q$, that PW candidate is rejected rather than recovered from coefficients dominated by roundoff.  At high frequency the number of significant Fourier modes on a patch grows on the scale of $\kappa h$; consequently the modal-extraction and Hankel costs also grow.  Sparse direction recovery is therefore most advantageous when the number of dominant directions is small compared with this angular bandwidth.  For broad angular content, the trace-error comparison naturally favors a larger FB component, as illustrated in \cref{sec:broad-content}.

The rank test and approximation test have different roles.  A large $\kappa_H=\sigma_1(H)/\sigma_q(H)$ indicates sensitivity of an assumed rank-$q$ exponential model, but a small $\kappa_H$ does not imply that the data are close to such a model.  Structured nondirectional content can perturb the recovered angles even when the retained signal subspace is well conditioned.  Such model mismatch is measured by the residual in \eqref{eq:modal-varpro} and ultimately by the trace error in \eqref{eq:T3}; a candidate is accepted only after both the numerical-rank test and the approximation comparison.  Thus singular values determine whether a rank can be resolved, whereas the trace residual determines whether that rank is an adequate representation.

The containing-disk and physical-boundary routes also use different data.  When \eqref{eq:radius-window} holds, the orthogonal modal coordinates provide the PW--FB comparison developed in \cref{sec:trace-theory}.  If it does not hold, the method falls back to weighted Cauchy least squares on $\partial K$ and does not invoke the disk identities or divide by unresolved Bessel factors.  This fallback remains a local comparison of candidate spaces; it is not claimed to recover an analytic continuation that does not exist.

\subsection{Comparison with globally coupled residual optimization}\label{sec:global-residual}
On a fixed-rank stratum let $z=(z_1,\ldots,z_m)\in\C^m$ collect the active complex plane-wave angles.  Discrete PW--FB allocations and trace-rank changes are outer events.  For fixed $z$ the PWDG state is
\begin{equation}\label{eq:global-state}
 K(z,\bar z)c=f(z,\bar z),
\end{equation}
and the weighted skeleton residual has the quadratic form
\begin{equation}\label{eq:Jresidual-matrix}
 \Jfun(c,z,\bar z)=c^*R(z,\bar z)c-c^*b(z,\bar z)-b(z,\bar z)^*c+\gamma(z,\bar z),
 \qquad R=R^*\ge0.
\end{equation}
This is the matrix form of the jump and boundary residual used in adaptive PWDG \citep{KapitaMonkWarburton2015,Kapita2026Directions}; $R$ is a residual Gram matrix and is unrelated to the Galerkin matrix $K$.

Write $z_j=\theta_j+\ii\eta_j$ and
\[
 \partial_{z_j}=\tfrac12(\partial_{\theta_j}-\ii\partial_{\eta_j}),
 \qquad
 \partial_{\bar z_j}=\tfrac12(\partial_{\theta_j}+\ii\partial_{\eta_j}).
\]
For the holomorphic local plane wave $\psi_{z_j}(x)=\exp(\ii\kappa d(z_j)\cdot(x-x_K))$, $K_{z_j}$ differentiates its trial column and $K_{\bar z_j}$ the conjugated test row.  The coefficient gradient of \eqref{eq:Jresidual-matrix} is
\begin{equation}\label{eq:wirtinger-c}
 g_c:=\partial_{\bar c}\Jfun=Rc-b.
\end{equation}
The DG-constrained residual problem is
\begin{equation}\label{eq:constrained-opt}
 \Phi_{\rm DG}(z,\bar z)=\Jfun(c(z,\bar z),z,\bar z),
 \qquad Kc=f.
\end{equation}
One adjoint solve,
\begin{equation}\label{eq:wirtinger-adjoint}
 K^*\lambda=g_c,
\end{equation}
gives the complete reduced Wirtinger derivative.  Indeed, differentiating \eqref{eq:global-state} with respect to $z_j$ and $\bar z_j$ and eliminating the two coefficient sensitivities yields
\begin{align}\label{eq:wirtinger-global-gradient}
 \partial_{\bar z_j}\Phi_{\rm DG}
 ={}&c^*R_{\bar z_j}c-c^*b_{\bar z_j}-b_{z_j}^*c+\gamma_{\bar z_j}\nonumber\\
 &+(f_{z_j}-K_{z_j}c)^*\lambda
   +\lambda^*(f_{\bar z_j}-K_{\bar z_j}c).
\end{align}
For the usual conjugated-test load, $f_{z_j}=0$.  Since $\Phi_{\rm DG}$ is real, first-order stationarity is simply
\begin{equation}\label{eq:wirtinger-stationarity}
 \partial_{\bar z}\Phi_{\rm DG}=0.
\end{equation}

If the Galerkin constraint is dropped, fixed-$z$ stationarity is simply $Rc_{\rm LS}=b$.  On a constant-rank stratum,
\begin{equation}\label{eq:varpro}
 c_{\rm LS}=R^\dagger b,
 \qquad
 \Phi_{\rm LS}=\gamma-b^*R^\dagger b,
\end{equation}
and the Wirtinger envelope formula is
\begin{equation}\label{eq:wirtinger-ls-gradient}
 \partial_{\bar z_j}\Phi_{\rm LS}
 =c_{\rm LS}^*R_{\bar z_j}c_{\rm LS}
  -c_{\rm LS}^*b_{\bar z_j}-b_{z_j}^*c_{\rm LS}+\gamma_{\bar z_j}.
\end{equation}
The coefficient derivative disappears because $\partial_{\bar c}\Jfun=0$.  This is the complex variable-projection formula \citep{GolubPereyra1973,GolubPereyra2003}.  The constrained and unconstrained residual formulations therefore differ only in whether the PWDG state equation is enforced; both are differentiated entirely in Wirtinger coordinates.

The local selector above operates directly at the target frequency and does not require frequency continuation once the modal sequence is resolved.  Continuation is used only for the separate globally coupled residual optimization in \eqref{eq:constrained-opt}.

\section{Numerical results}\label{sec:numerics}
All computations use binary64 arithmetic.  Unless stated otherwise, $\tau_{\rm rank}=10^{-12}$ and every PWDG solve uses local trace orthonormalization followed by graph--Riesz normalization.  Complex-angle optimizations use the bounded strip $|\Im z|\le\eta_{\max}$ with $\eta_{\max}=1.8$ in the reported experiments.

For the transmission test we reproduce the fluid--fluid interface model of \citet{MascottoPichler2020}.  The lower and upper half-squares have $\kappa_j=\omega n_j$.  For incidence angle $\theta_i$ measured from the horizontal interface, set
\[
 \xi=\kappa_1\cos\theta_i,
 \qquad \eta_1=\kappa_1\sin\theta_i,
 \qquad \eta_2=(\kappa_2^2-\xi^2)^{1/2},
 \quad \Re\eta_2,\Im\eta_2\ge0.
\]
The exact field is
\begin{equation}\label{eq:transmission}
 u=\begin{cases}
 e^{\ii(\xi x+\eta_1y)}+R e^{\ii(\xi x-\eta_1y)},&y<0,\\
 T e^{\ii(\xi x+\eta_2y)},&y>0,
 \end{cases}
 \qquad
 R=\dfrac{\eta_1-\eta_2}{\eta_1+\eta_2},\quad
 T=\dfrac{2\eta_1}{\eta_1+\eta_2}.
\end{equation}
The transmitted complex angle is
\begin{equation}\label{eq:transmitted-complex-angle}
 z_t(\theta_i)=
 \begin{cases}
 \arccos(\xi/\kappa_2),&\xi\le\kappa_2,\\
 \ii\operatorname{arcosh}(\xi/\kappa_2),&\xi>\kappa_2.
 \end{cases}
\end{equation}
For $n_1=2$, $n_2=1$, $\omega=12$, the critical angle is $\theta_c=\arccos(\kappa_2/\kappa_1)=60^\circ$.  Equation \eqref{eq:transmitted-complex-angle} is continuous through $z_t=0$: above $60^\circ$ it is real, below $60^\circ$ it is purely imaginary.

Representative exact fields are shown in \cref{fig:solution-gallery}.  The first is the regular Fourier--Bessel solution $J_5(16r)e^{5\ii\theta}$; the other two are \eqref{eq:transmission} on opposite sides of the critical angle.  Each panel is a true 2D projection shown with equal $x$--$y$ scaling, the physical domain boundary, and a visible colormap.  Color encodes $\Re u$ after panelwise normalization.  The normalization is used only for visualization; all errors below are computed from the unscaled complex fields.  In the $29^\circ$ case the transmitted component loses its oscillatory normal wavenumber and decays exponentially into the upper medium.

\begin{figure}[H]
\centering
\begin{minipage}[b]{.29\textwidth}
\centering
\includegraphics[width=\linewidth]{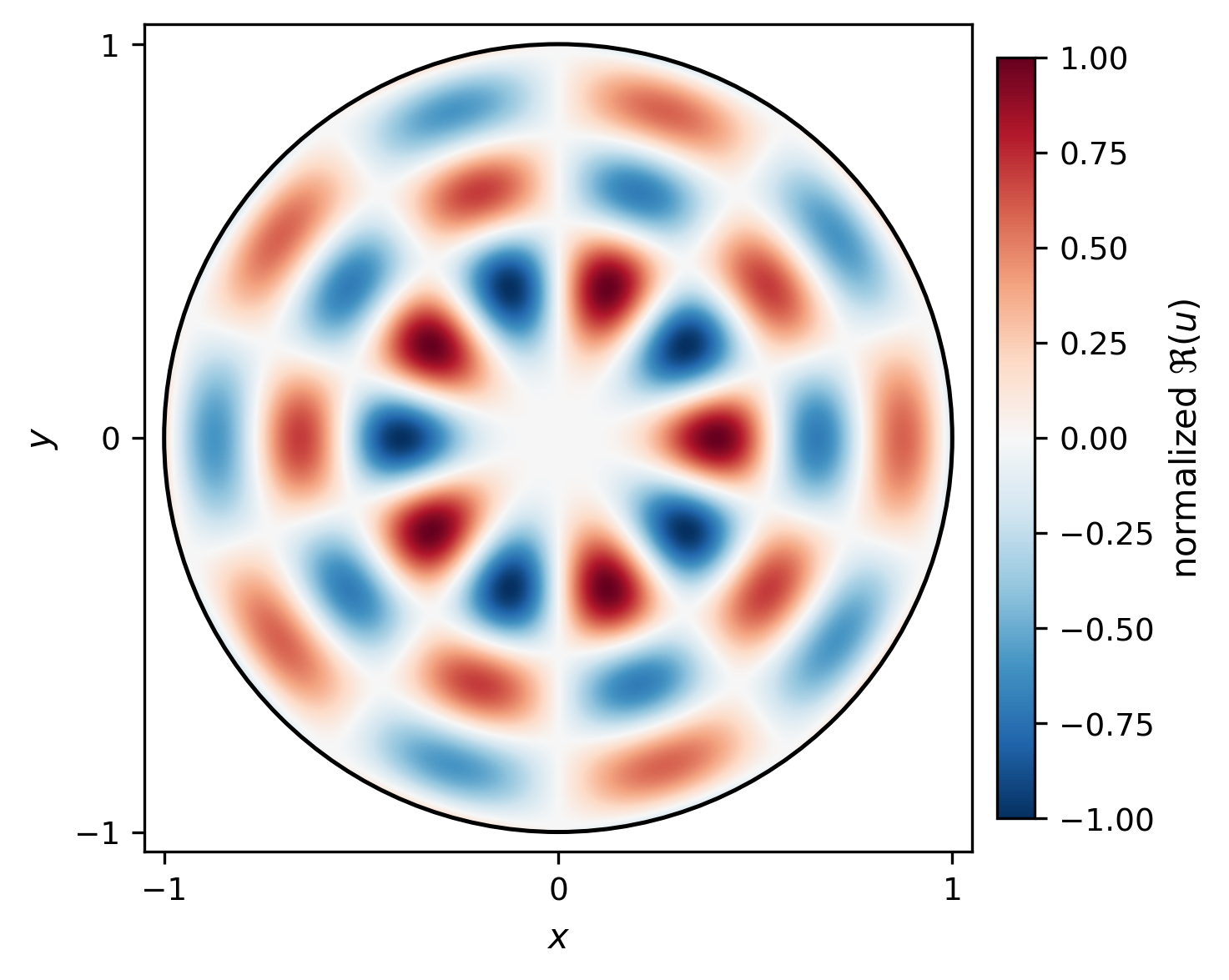}\\[-1mm]
{\footnotesize (a) Fourier--Bessel solution}
\end{minipage}\hfill
\begin{minipage}[b]{.29\textwidth}
\centering
\includegraphics[width=\linewidth]{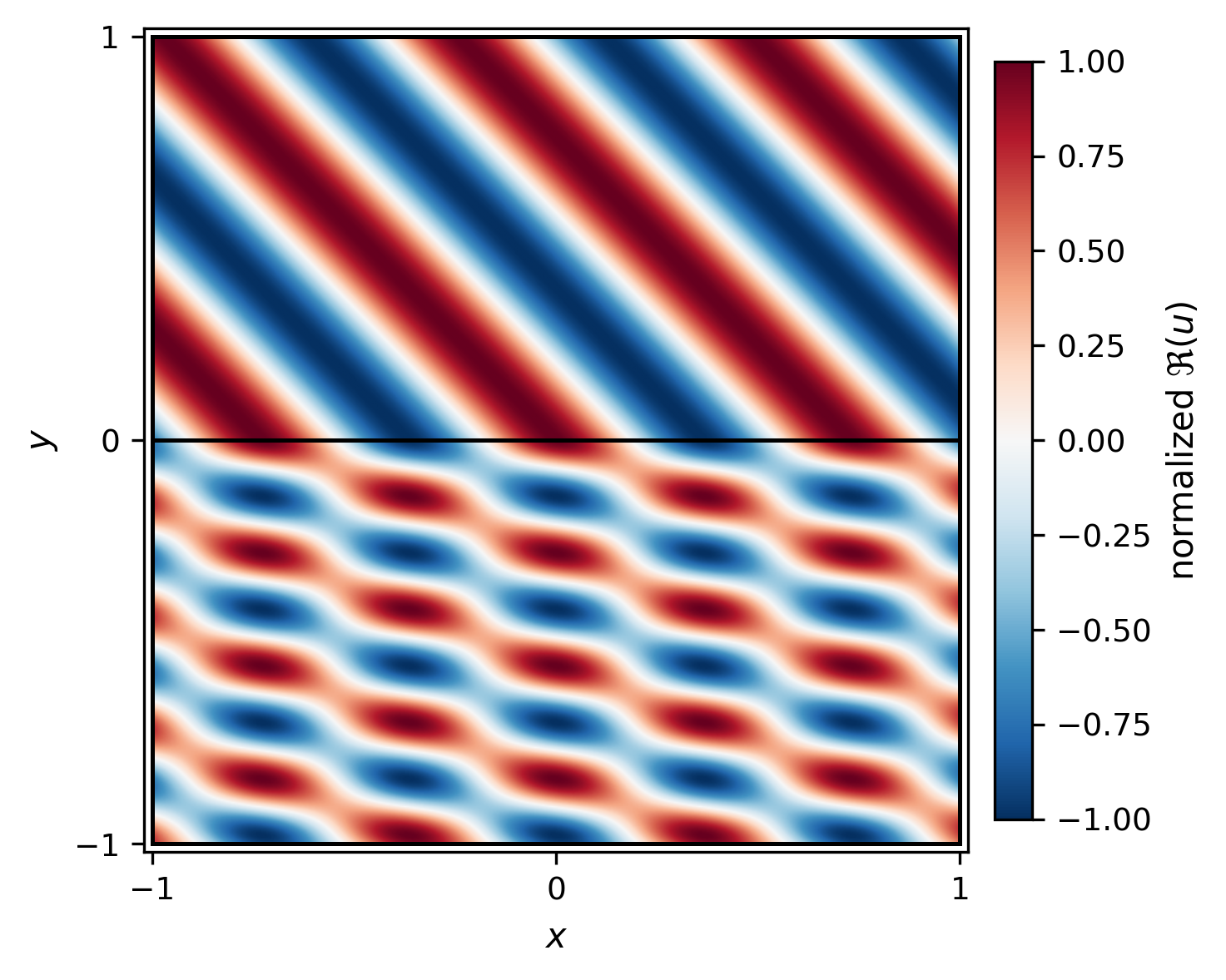}\\[-1mm]
{\footnotesize (b) $\theta_i=69^\circ$}
\end{minipage}\hfill
\begin{minipage}[b]{.29\textwidth}
\centering
\includegraphics[width=\linewidth]{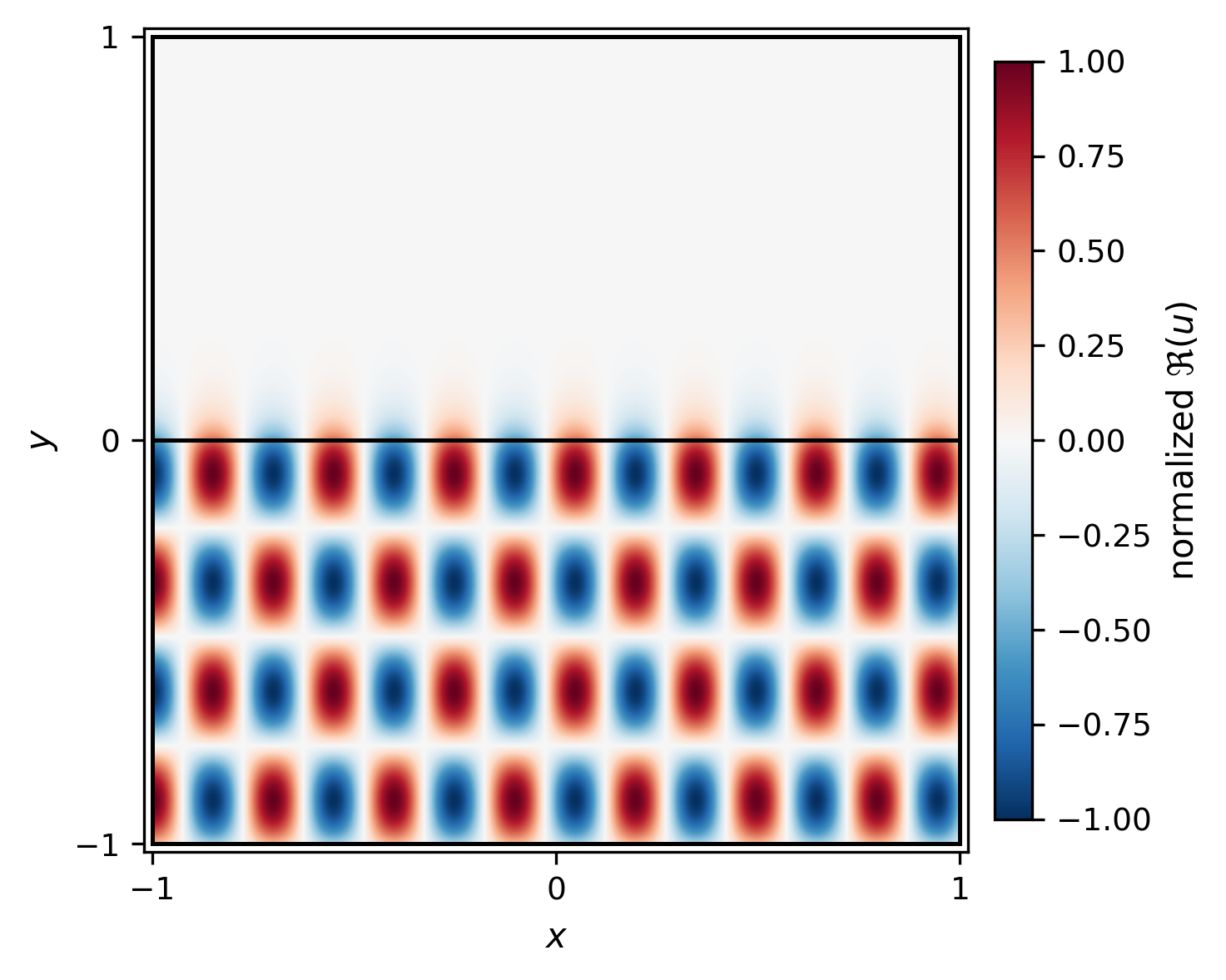}\\[-1mm]
{\footnotesize (c) $\theta_i=29^\circ$}
\end{minipage}
\caption{Representative Helmholtz solutions. (a) A regular Fourier--Bessel solution on the unit disk. (b) Propagating fluid--fluid transmission above the critical angle on $[-1,1]^2$ with interface $y=0$. (c) Complex-angle transmission below the critical angle; the upper-medium solution decays exponentially away from the interface. The geometry in each panel is shown to scale, and colour represents the panelwise-normalized real part.}
\alttext{Alt text: Three normalized real-part heat maps. Left: a fivefold Fourier--Bessel solution on a unit disk. Centre: oscillatory propagating transmission across the horizontal interface $y=0$ at $69$ degrees. Right: at $29$ degrees the lower medium is oscillatory while the transmitted field above $y=0$ decays exponentially.}
\label{fig:solution-gallery}
\end{figure}

\subsection{Verification and local selection}
We compare the closed formulas of \cref{thm:T1,thm:T2,thm:T3} with independent boundary quadrature and direct circle least squares.  \Cref{tab:identitycheck} gives the largest discrepancy.

\begin{table}[htbp]
\centering
\caption{Verification of the three modal identities against independent direct computation.}
\label{tab:identitycheck}
\footnotesize
\begin{tabular}{lll}
\toprule
identity & comparison & discrepancy\\
\midrule
\cref{thm:T1} & analytical $\tau_m^2$ vs.\ direct trace quadrature & $2\times10^{-16}$\\
\cref{thm:T2} & circulant eigenvalues vs.\ direct PW Gram spectrum & $7\times10^{-16}$\\
\cref{thm:T3} & modal tail least squares vs.\ direct circle least squares & ratio $1.0000$\\
\bottomrule
\end{tabular}
\end{table}

We next test the selector on exactly representable sparse, broad, and mixed fields.
On $B(0,0.4)$ at $k=16$, the selector receives only equilibrated modal coefficients and chooses from the same candidate library for three exactly representable fields:
\begin{equation}\label{eq:three-fields}
 u_{\rm ray}=\sum_{j=1}^3c_je^{\ii kd(\theta_j)\cdot x},\;\;
 u_{\rm broad}=\sum_{m=-4}^4b_mJ_m(kr)e^{\ii m\theta},\;\;
 u_{\rm mix}=u_{\rm ray}+0.75\,u_{\rm broad},
\end{equation}
with $(\theta_1,\theta_2,\theta_3)=(18^\circ,71^\circ,143^\circ)$.
The ray count and active harmonics are withheld.  \Cref{tab:selection} recovers the generating family and directions.

\begin{table}[htbp]
\centering
\caption{Blind selection for \eqref{eq:three-fields} at $k=16$.  Active dimension counts retained nonzero local coordinates; the reported error is the relative scaled Cauchy-trace error.}
\label{tab:selection}
\footnotesize
\begin{tabular}{lrrrll}
\toprule
field & $q_{\PW}$ & $q_{\FB}$ & active dim. & rel.\ trace error & recovered directions\\
\midrule
three rays & 3 & 0 & 3 & $6.85\times10^{-16}$ & $18^\circ,71^\circ,143^\circ$\\
broad angular & 0 & 9 & 9 & $0$ & --\\
mixed & 3 & 9 & 12 & $6.71\times10^{-16}$ & $18^\circ,71^\circ,143^\circ$\\
\bottomrule
\end{tabular}
\end{table}

A distributed angular spectrum gives a nontrivial elementwise selection problem on a curved annulus.
The reference field is an outgoing Herglotz--DtN solution.
Let $d(\phi)=(\cos\phi,\sin\phi)$ and prescribe on $r=a$ the trace of the Herglotz wave $v(x)=\int_0^{2\pi}g(\phi)e^{\ii kd(\phi)\cdot x}d\phi$ \citep{ColtonKress2001,ColtonKressBook}.  With $\widehat g_m$ the Fourier coefficients of $g$, Jacobi--Anger gives $v(a,\theta)=2\pi\sum_m\ii^mJ_m(ka)\widehat g_me^{\ii m\theta}$.  Replacing $J_m(kr)$ by $H_m^{(1)}(kr)$ and matching at $r=a$ gives the outgoing annular field
\begin{equation}\label{eq:herglotz-dtn}
 u(r,\theta)=\sum_{m}c_mH_m^{(1)}(kr)e^{\ii m\theta},\qquad
 c_m=\frac{2\pi\ii^mJ_m(ka)\widehat g_m}{H_m^{(1)}(ka)},
\end{equation}
whose Neumann trace on $r=R$ is exactly $\DtN_Nu$ with the multiplier of \eqref{eq:dtn-map}.  The series is truncated at $|m|\le70$, far beyond the active content at $k=16$.  We use
\begin{equation}\label{eq:density}
 g(\phi)=e^{1.55\cos(\phi-0.48)+0.22\cos(2\phi+0.35)}\,
 e^{\ii[0.28\sin(3\phi+0.20)+0.10\cos(5\phi-0.40)]}.
\end{equation}
In contrast to \eqref{eq:transmission}, the field \eqref{eq:herglotz-dtn} has a genuinely distributed angular spectrum and tests exactly that regime (\cref{fig:herglotz}).

\begin{figure}[H]
\centering
\includegraphics[width=.58\textwidth]{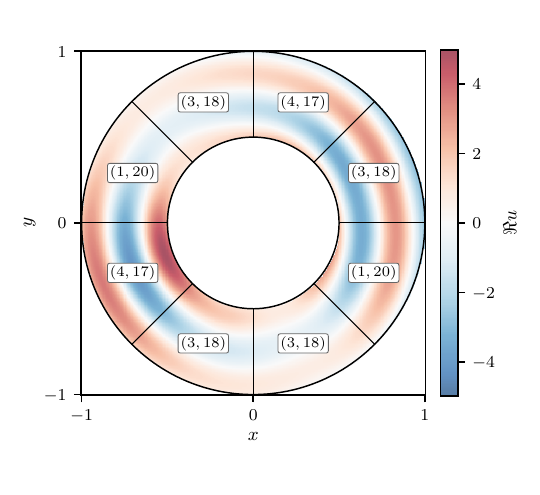}
\caption{Outgoing Herglotz--DtN reference solution \eqref{eq:herglotz-dtn} with density \eqref{eq:density} at $k=16$ on $0.5\le r\le1$. The pair $(q_{\PW},q_{\FB})$ is selected independently on each sector at budget $p=21$; see \cref{tab:blind}.}
\alttext{Alt text: Annular heat map of the outgoing Herglotz--DtN solution with eight sector boundaries. Each sector is labelled by its selected pair of plane-wave and Fourier--Bessel dimensions.}
\label{fig:herglotz}
\end{figure}

For the distributed field \eqref{eq:herglotz-dtn}, use eight curved sectors, $|m|\le70$ in the DtN map, local trace radius $h=0.44$, and budget $p=21$.  Each element independently chooses $0\le q_{\PW}\le4$; the remaining coordinates are centered FB modes.  The reference satisfies the truncated DtN relation to relative residual $2.24\times10^{-15}$.

\begin{table}[htbp]
\centering
\caption{Elementwise selection for the Herglotz--DtN field at $p=21$.  The last column is the relative best local trace error from \eqref{eq:T3}, evaluated before assembly.}
\label{tab:blind}
\footnotesize
\begin{tabular}{rrrrl}
\toprule
element & center angle & $q_{\PW}$ & $q_{\FB}$ & rel.\ local trace error\\
\midrule
0 & $22.5^\circ$ & 3 & 18 & $9.91\times10^{-3}$\\
1 & $67.5^\circ$ & 4 & 17 & $1.29\times10^{-2}$\\
2 & $112.5^\circ$ & 3 & 18 & $4.55\times10^{-2}$\\
3 & $157.5^\circ$ & 1 & 20 & $3.76\times10^{-2}$\\
4 & $202.5^\circ$ & 4 & 17 & $2.89\times10^{-2}$\\
5 & $247.5^\circ$ & 3 & 18 & $8.81\times10^{-2}$\\
6 & $292.5^\circ$ & 3 & 18 & $3.01\times10^{-2}$\\
7 & $337.5^\circ$ & 1 & 20 & $1.58\times10^{-2}$\\
\bottomrule
\end{tabular}
\end{table}

The selected mesh contains $22$ PW and $146$ FB coordinates and retains rank $168/168$, with $\kappa_{\rm tr,final}=1.00000004$ and $\kappa_{\GR}=2.62$.  Its relative global $L^2$ error is $4.33\times10^{-4}$, compared with $6.23\times10^{-4}$ for pure FB and $8.83\times10^{-4}$ for equispaced PW at the same nominal dimension.

\subsection{High-order stability and estimated traces}\label{sec:hankel-highp}
On the annulus $0.5<r<1$ at $k=16$, take the exact outgoing field
\begin{equation}\label{eq:centered-hankel-highp}
 u_{\rm cyl}(x)=H_0^{(1)}(k|x|).
\end{equation}
The mesh consists of eight exact curved sectors and one radial layer.  We impose the exact Dirichlet trace of \eqref{eq:centered-hankel-highp} on $r=0.5$ and the circular DtN condition on $r=1$, truncated at $|m|\le70$.  Each element starts from $p$ equispaced real plane waves.  The local trace disk has center radius $0.75$ and radius $h=0.44$.  Before assembly the normalized plane-wave trace Gram matrix is compressed with the fixed numerical-rank threshold $\tau_{\rm rank}=10^{-12}$, and the retained trace space is orthonormalized.  All quantities in this experiment, including the rank decision, assembly, solve, errors, and condition numbers, are computed in IEEE binary64 arithmetic.

To make the conditioning statements unambiguous, let $G^{\rm tr}_{K,\rm raw}$ be the raw local Cauchy-trace Gram matrix and let $T_K$ be the local transformation returned by the trace-rank/compression step.  We report
\[
 \kappa_{\rm tr,raw}=\max_K\kappa_2(G^{\rm tr}_{K,\rm raw}),
 \qquad
 \kappa_{\rm tr,final}=\max_K\kappa_2(T_K^*G^{\rm tr}_{K,\rm raw}T_K).
\]
With $T=\operatorname{blockdiag}(T_K)$, the assembled matrix in the retained local coordinates is
\[
 K_h=T^*K_{\rm raw}T.
\]
Thus $\kappa_2(K_h)$ is the condition number \emph{after local trace conditioning but before the global graph--Riesz transformation}.  We then form
\[
 G_h=\frac{K_h^*-K_h}{2\ii},\qquad B^*G_hB=I,
\]
and the condition number of the matrix actually solved in graph--Riesz coordinates is
\begin{equation}\label{eq:hankel-highp-kgr}
 \kappa_{\GR}=\kappa_2(B^*K_hB).
\end{equation}
These four condition numbers answer different questions: $\kappa_{\rm tr,raw}$ measures redundancy of the nominal local traces, $\kappa_{\rm tr,final}$ measures conditioning of the retained local trace coordinates, $\kappa_2(K_h)$ measures the assembled DG operator after the local step, and $\kappa_{\GR}$ measures the final globally transformed linear system.

\begin{table}[!htbp]
\centering
\caption{Centered Hankel DtN sweep at $k=16$.  $r_K$ is retained local PW rank and ``retained'' is the global dimension.  The condition columns refer, in order, to the raw local trace Gram, the trace-orthonormalized local Gram, the assembled PWDG matrix, and the final graph--Riesz matrix.}
\label{tab:hankel-highp-global}
\resizebox{\textwidth}{!}{%
\begin{tabular}{rrrrccccc}
\toprule
$p$ & nominal & $r_K$ & retained & rel. $L^2$ error & $\kappa_{\rm tr,raw}$ & $\kappa_{\rm tr,final}$ & $\kappa_2(K_h)$ & $\kappa_{\GR}$\\
\midrule
 9 &  72 &  9 &  72 & $5.0086\times10^{-1}$ & $2.106\times10^{0}$  & $1.0000000$ & $5.646\times10^{0}$ & $1.616$\\
15 & 120 & 15 & 120 & $7.1083\times10^{-2}$ & $1.598\times10^{0}$  & $1.0000000$ & $3.265\times10^{1}$ & $2.106$\\
21 & 168 & 21 & 168 & $7.2243\times10^{-4}$ & $8.627\times10^{1}$  & $1.0000000$ & $4.163\times10^{2}$ & $2.556$\\
27 & 216 & 27 & 216 & $1.1688\times10^{-5}$ & $5.021\times10^{4}$  & $1.0000000$ & $5.728\times10^{3}$ & $3.797$\\
33 & 264 & 33 & 264 & $1.8880\times10^{-7}$ & $1.460\times10^{8}$  & $1.00000004$& $6.793\times10^{4}$ & $4.941$\\
37 & 296 & 37 & 296 & $1.6935\times10^{-8}$ & $6.072\times10^{10}$ & $1.0000158$ & $3.588\times10^{5}$ & $6.173$\\
41 & 328 & 37 & 296 & $1.5608\times10^{-8}$ & $4.108\times10^{13}$ & $1.0000085$ & $3.493\times10^{5}$ & $6.209$\\
49 & 392 & 37 & 296 & $1.5449\times10^{-8}$ & $8.450\times10^{16}$ & $1.0000043$ & $3.429\times10^{5}$ & $6.189$\\
57 & 456 & 37 & 296 & $1.5363\times10^{-8}$ & $1.681\times10^{17}$ & $1.0000039$ & $3.407\times10^{5}$ & $6.146$\\
65 & 520 & 37 & 296 & $1.5517\times10^{-8}$ & $4.932\times10^{17}$ & $1.0000051$ & $3.438\times10^{5}$ & $6.161$\\
73 & 584 & 37 & 296 & $1.5368\times10^{-8}$ & $2.806\times10^{17}$ & $1.0000022$ & $3.395\times10^{5}$ & $6.148$\\
81 & 648 & 37 & 296 & $1.5372\times10^{-8}$ & $3.189\times10^{17}$ & $1.0000045$ & $3.411\times10^{5}$ & $6.151$\\
89 & 712 & 37 & 296 & $1.5326\times10^{-8}$ & $3.038\times10^{17}$ & $1.0000020$ & $3.388\times10^{5}$ & $6.131$\\
97 & 776 & 37 & 296 & $1.5391\times10^{-8}$ & $4.151\times10^{17}$ & $1.0000031$ & $3.374\times10^{5}$ & $6.125$\\
\bottomrule
\end{tabular}}
\end{table}

\begin{figure}[H]
\centering
\begin{minipage}[t]{.30\textwidth}\centering
\includegraphics[width=\linewidth]{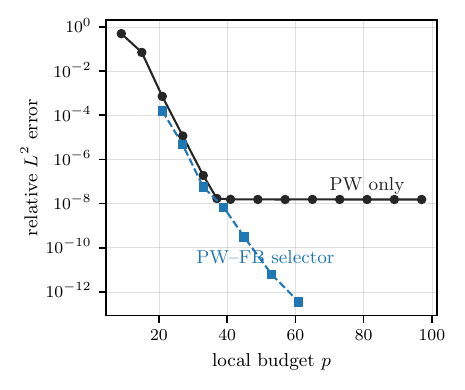}
\end{minipage}\hfill
\begin{minipage}[t]{.30\textwidth}\centering
\includegraphics[width=\linewidth]{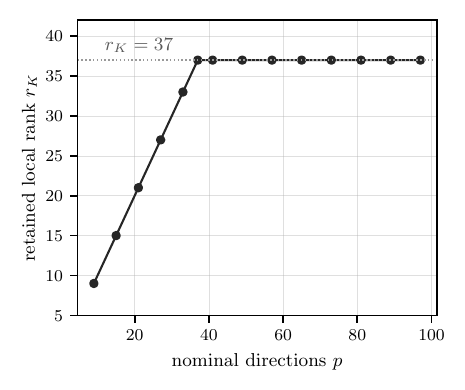}
\end{minipage}\hfill
\begin{minipage}[t]{.30\textwidth}\centering
\includegraphics[width=\linewidth]{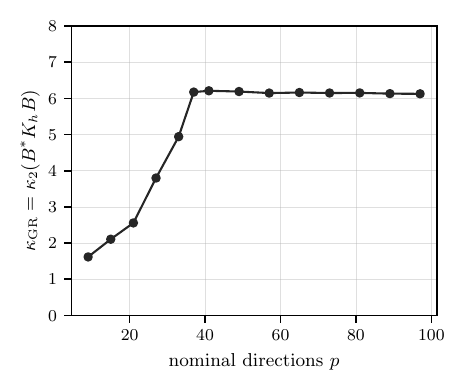}
\end{minipage}
\caption{Centered-Hankel DtN experiment. Left: global $L^2$ error. Centre: retained plane-wave rank. Right: $\kappa_{\GR}$. The plane-wave-only error stalls when the retained local rank reaches $37$, whereas PW--FB selection passes this floor.}
\alttext{Alt text: Three plots versus local budget $p$. Left: global relative $L^2$ error for plane-wave only and PW--FB selection. Centre: retained plane-wave rank rises and then plateaus at $37$. Right: the graph--Riesz condition number remains moderate as $p$ increases.}
\label{fig:hankel-highp-global}
\end{figure}

The table exposes the practical saturation mechanism directly.  Up to $p=37$ every nominal direction is retained and the global error falls from $5.01\times10^{-1}$ to $1.69\times10^{-8}$.  At $p=41$ the rank test first rejects unresolved directions: the nominal dimension grows to $328$, but the retained dimension stays at $296$.  From that point through $p=97$, every element retains exactly $37$ directions and the error remains near $1.5\times10^{-8}$.  The high nominal dimensions therefore provide increasingly redundant raw descriptions of essentially the same numerically resolvable local trace space.

The PW-only error floor coincides with local rank saturation at $r_K=37$: $\kappa_{\rm tr,final}\approx1$ and $\kappa_{\GR}\approx6.1$ even after $\kappa_{\rm tr,raw}$ enters the binary64 singularity range.  The obstruction is therefore local trace resolution, not conditioning of the final solve.

To determine whether Fourier--Bessel coordinates pass the plane-wave ceiling, we repeat the same DtN problem with $q\in\{0,2,4,6\}$ and an FB remainder at fixed budget $p$.  Every candidate is polished in the weighted tail norm and must retain full hybrid trace rank at $\varepsilon_{\rm solve}=10^{-12}$.

\begin{table}[!htbp]
\centering
\caption{PW--FB continuation for the centered Hankel DtN problem.  The FB block is $|m|\le M$; retained dimension is measured after local trace orthonormalization.}
\label{tab:hankel-hybrid-continuation}
\footnotesize
\begin{tabular}{rrrrrcc}
\toprule
$p$ & $q_{\PW}$ & $M$ & retained & rel. $L^2$ error & $\kappa_2(K_h)$ & $\kappa_{\GR}$\\
\midrule
21 & 2 &  9 & 21 & $1.5982\times10^{-4}$ & $7.232\times10^{2}$ & 3.023\\
27 & 2 & 12 & 27 & $4.6275\times10^{-6}$ & $1.024\times10^{4}$ & 3.329\\
33 & 2 & 15 & 33 & $5.6434\times10^{-8}$ & $1.148\times10^{5}$ & 5.732\\
39 & 0 & 19 & 39 & $6.8243\times10^{-9}$ & $7.132\times10^{5}$ & 6.368\\
45 & 0 & 22 & 45 & $3.0505\times10^{-10}$ & $8.613\times10^{6}$ & 7.909\\
53 & 0 & 26 & 53 & $6.3124\times10^{-12}$ & $2.063\times10^{8}$ & 10.685\\
61 & 0 & 30 & 61 & $3.5005\times10^{-13}$ & $5.416\times10^{9}$ & 13.152\\
\bottomrule
\end{tabular}
\end{table}

At $p=21,27,33$ the selector retains two PW directions and an FB block.  From $p=39$ onward the lowest-tail mixed candidates fail the full hybrid rank test, so the selected family becomes pure FB.  The error then passes the PW-only rank-$37$ floor and reaches $3.50\times10^{-13}$ at $p=61$, while the final matrix has $\kappa_{\GR}=13.15$.

The rank-threshold sweep in \cref{tab:hankel-rank-sensitivity} shows a stable plateau: $10^{-11}$ and $10^{-12}$ retain the same $37$ directions and give identical errors, whereas $10^{-13}$ admits additional directions for which $G_h$ is no longer positive definite in binary64.

\begin{table}[!htbp]
\centering
\caption{Rank-threshold sensitivity at $p=97$.  ``Fail'' means that the assembled $G_h$ is not positive definite in binary64.}
\label{tab:hankel-rank-sensitivity}
\footnotesize
\begin{tabular}{ccccc}
\toprule
$\tau_{\rm rank}$ & retained $r_K$ & rel. $L^2$ error & $\kappa_2(K_h)$ & $\kappa_{\GR}$\\
\midrule
$10^{-10}$ & 35 & $4.6080\times10^{-8}$ & $1.582\times10^5$ & 5.864\\
$10^{-11}$ & 37 & $1.5391\times10^{-8}$ & $3.374\times10^5$ & 6.125\\
$10^{-12}$ & 37 & $1.5391\times10^{-8}$ & $3.374\times10^5$ & 6.125\\
$10^{-13}$ & $>37$ & -- & -- & Fail\\
\bottomrule
\end{tabular}
\end{table}

An auxiliary arbitrary-precision local calculation confirms that the nominal PW span continues beyond this binary64 rank ceiling.  \Cref{tab:hankel-highp-global,tab:hankel-rank-sensitivity} concern the realizable binary64 algorithm, including rank selection, assembly, and the final graph--Riesz solve.

The disk radius and rank tolerance are coupled through $\kappa h$.  At $p=97$ the geometric containing radius of one annular sector is $0.4204$; \cref{tab:radius-sensitivity} varies $h$ just above this value and farther outward.
\begin{table}[htbp]
\centering
\caption{Trace-radius/rank-threshold sensitivity for the $p=97$ centered-Hankel test.  ``Fail'' means that the retained floating-point coordinates do not give a positive graph matrix.}
\label{tab:radius-sensitivity}
\footnotesize
\begin{tabular}{ccccc}
\toprule
$h$ & $\tau_{\rm rank}$ & retained $r_K$ & rel. $L^2$ error & $\kappa_{\GR}$\\
\midrule
0.425 & $10^{-12}$ & 37 & $1.538\times10^{-8}$ & 6.128\\
0.440 & $10^{-12}$ & 37 & $1.539\times10^{-8}$ & 6.125\\
0.460 & $10^{-12}$ & -- & Fail & --\\
0.460 & $10^{-11}$ & 37 & $1.540\times10^{-8}$ & 6.127\\
0.500 & $10^{-11}$ & -- & Fail & --\\
0.500 & $10^{-10}$ & 37 & $1.540\times10^{-8}$ & 6.126\\
\bottomrule
\end{tabular}
\end{table}
Thus the physical approximation is essentially unchanged when the same $37$ trace directions are retained.  Increasing $h$ moves more modal content above a fixed relative threshold, so the threshold must be tightened to the accuracy that the global floating-point solve can actually support.  This is the practical reason for choosing a near-minimal admissible disk and for tying $\tau_{\rm rank}$ to $\varepsilon_{\rm solve}$.

We next replace exact selector data by modal data estimated from a coarse numerical field rather than from the exact trace.  Starting from nine equispaced PW plus $J_0$ (relative $L^2$ error $3.7\times10^{-1}$), we estimate modal data, select at budget $p=21$, solve, and repeat once.  Element-boundary Cauchy least squares uses a relative SVD cutoff $10^{-10}$.  As a comparison, an inscribed-circle estimator at $h_{\rm in}=0.24$ inverts factors $J_m(\kappa h_{\rm in})$ and therefore amplifies high-mode perturbations by $|J_m(\kappa h_{\rm in})|^{-1}$.

\begin{table}[!htbp]
\centering
\caption{Selection from estimated trace data at $p=21$.  Data error is measured in equilibrated modal coordinates; the control uses the exact trace.}
\label{tab:estimated-trace}
\footnotesize
\begin{tabular}{llccll}
\toprule
stage & estimator & data error & $(q_{\PW},M)$ & rel.\ $L^2$ error & $\kappa_{\GR}$\\
\midrule
coarse solve & -- & -- & -- & $3.72\times10^{-1}$ & --\\
cycle 1 & element boundary & $7.9\times10^{-1}$ & $(2,9)$ & $2.24\times10^{-4}$ & 2.88\\
cycle 2 & element boundary & $1.9\times10^{-2}$ & $(2,9)$ & $2.24\times10^{-4}$ & 2.88\\
cycle 1 & inscribed circle & $7.9\times10^{-1}$ & $(6,7)$ & $7.80\times10^{-4}$ & 2.63\\
cycle 2 & inscribed circle & $4.3\times10^{-2}$ & $(6,7)$ & $8.50\times10^{-4}$ & 2.55\\
control & exact trace & $0$ & $(2,9)$ & $1.60\times10^{-4}$ & 3.02\\
\bottomrule
\end{tabular}
\end{table}

Boundary-estimated data select the same $(2,9)$ space as the exact-trace control after one cycle and give error $2.24\times10^{-4}$ versus $1.60\times10^{-4}$ for the control.  The inscribed-circle data select $(6,7)$ and give $7.80\times10^{-4}$, consistent with the Bessel-factor amplification above.

To test the fallback without a containing disk, consider the L-shaped domain $(-1,1)^2\setminus((0,1)\times(-1,0))$ and the corner-singular Helmholtz solution
\begin{equation}\label{eq:lshape-singular}
 u(r,\theta)=J_{2/3}(\kappa r)\sin(2\theta/3),\qquad 0\le\theta\le3\pi/2,\qquad \kappa=12.
\end{equation}
A representative true solution is shown in \cref{fig:lshape-solution}.  The panel is a true 2D projection on the physical L-shaped domain, displayed to scale with the domain boundary and a visible colormap for the panelwise-normalized real part.

\begin{figure}[H]
\centering
\includegraphics[width=.58\textwidth]{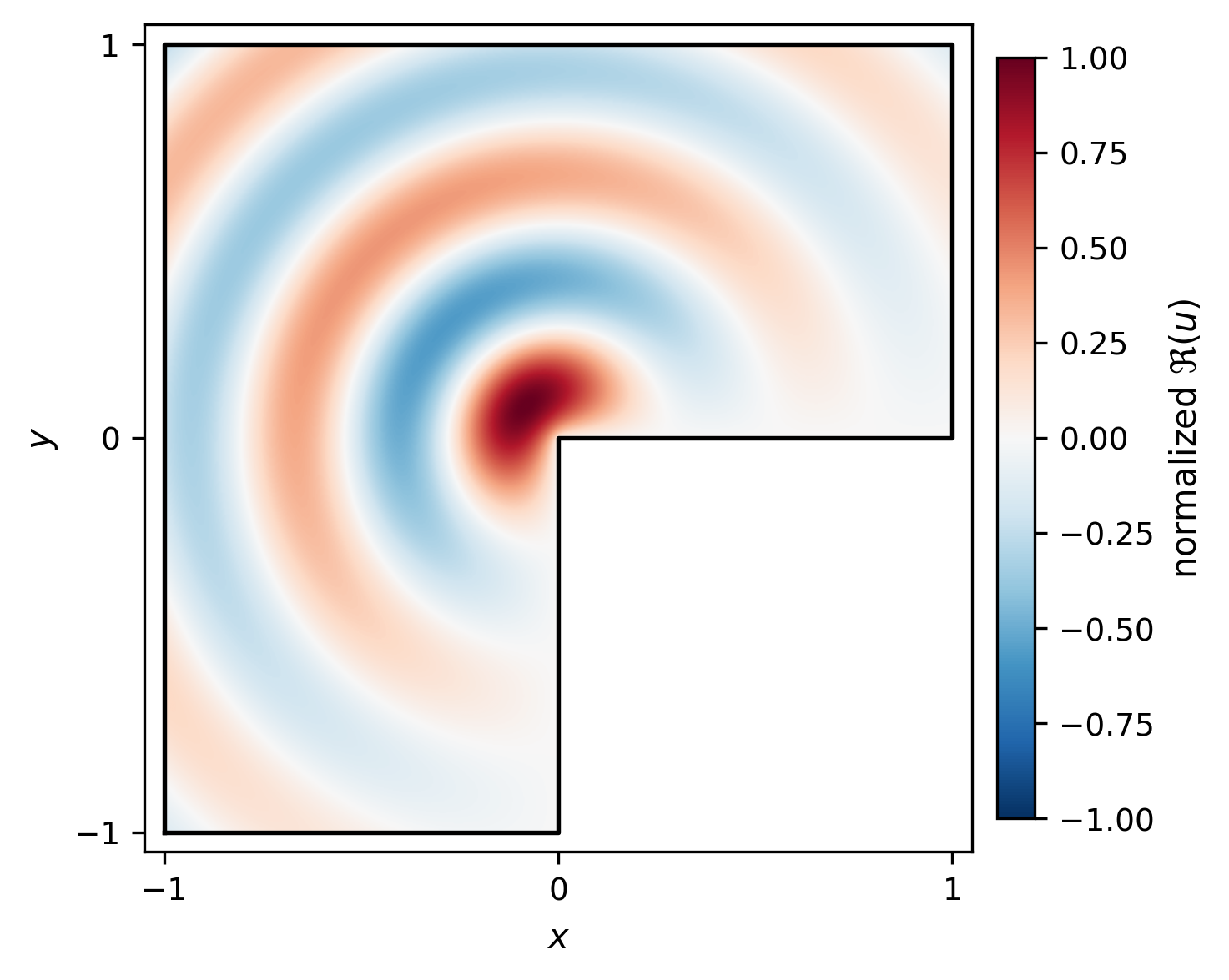}
\caption{True solution of the L-shaped test problem \eqref{eq:lshape-singular} on $(-1,1)^2\setminus((0,1)\times(-1,0))$. The geometry is shown to scale, and colour represents the panelwise-normalized real part.}
\alttext{Alt text: Heat map of the normalized real part of the corner-singular Helmholtz solution on an L-shaped domain obtained by removing the lower-right quadrant from the square $[-1,1]^2$. Concentric wave fronts emanate from the re-entrant corner at the origin.}
\label{fig:lshape-solution}
\end{figure}

This solution is regular away from the reentrant vertex but is not analytic there.  On every triangle the selector therefore fits the exact Cauchy data on $\partial K$ directly.  At fixed local budget $p=7$ it compares $(q_{\PW},q_{\FB})=(0,7)$ and $(2,5)$; the two PW angles are refined by local variable projection.  No global residual search is used.
\begin{table}[htbp]
\centering
\caption{Boundary-trace selection on the L-shaped corner problem \eqref{eq:lshape-singular}.  The last three columns are global relative $L^2$ errors at the same local budget $p=7$.}
\label{tab:lshape-boundary}
\footnotesize
\begin{tabular}{rrrrrrr}
\toprule
$n$ & triangles & FB cells & hybrid cells & selector & pure FB & equispaced PW\\
\midrule
2 & 24 & 4 & 20 & $7.35\times10^{-1}$ & $1.23$ & $9.52\times10^{-1}$\\
3 & 54 & 2 & 52 & $2.23\times10^{-1}$ & $6.01\times10^{-1}$ & $6.43\times10^{-1}$\\
\bottomrule
\end{tabular}
\end{table}
The corresponding graph--Riesz condition numbers are $4.61$ and $7.65$.  This is a coarse robustness test, not a singular-corner convergence theorem: its purpose is to show that the selector remains well defined when \eqref{eq:radius-window} fails and the comparison is made on the physical element boundary.

\subsection{Sparse recovery, complex transmission, and search cost}
For the three-ray field, ESPRIT recovers the generating directions to about $3\times10^{-11}$ radians.  By \cref{cor:sparse}, any PW space containing those directions is exact; \cref{tab:sparse} confirms roundoff-level global error, while equispaced PW and pure FB at the same nominal dimensions remain unresolved.

\begin{table}[!htbp]
\centering
\caption{Global relative $L^2$ error for the three-ray field at $\kappa=8$.  Pure FB uses $(0,p)$; equispaced and direction-identified PW both use $(p,0)$, so the last two columns isolate the choice of directions.}
\label{tab:sparse}
\footnotesize
\begin{tabular}{rccc}
\toprule
$p$ & FB $(0,p)$ & equispaced PW $(p,0)$ & identified PW $(p,0)$\\
\midrule
5  & $3.41\times10^{-1}$ & $4.73\times10^{-1}$ & $5.30\times10^{-13}$\\
9  & $2.87\times10^{-2}$ & $4.56\times10^{-2}$ & $8.11\times10^{-15}$\\
13 & $9.31\times10^{-4}$ & $1.60\times10^{-3}$ & $8.94\times10^{-14}$\\
\bottomrule
\end{tabular}
\end{table}
Once the three recovered rays are contained in the local space, the exact solution is representable.  The variations between $10^{-15}$ and $10^{-13}$ in the selector column are therefore floating-point assembly/solve effects and are not expected to decrease monotonically with the nominal budget.

The interface problem now tests the same selector with complex angles.  On the eight-triangle mesh take $n_1=2$, $n_2=1$, $\omega=12$, $\alpha=\beta=1/2$, trace radius $h=0.44$, and two PWs below the interface and one above.  Exact Cauchy traces are supplied to isolate the representation problem.  The physical angles and the critical angle are not supplied to ESPRIT.  For each incidence angle the lower trace recovers the incident and reflected real directions; the upper trace recovers one node $\zeta_t$ and hence $z_t=\ii\operatorname{Log}\zeta_t$.

\begin{figure}[H]
\centering
\begin{minipage}[t]{.45\textwidth}
\centering
\includegraphics[width=\linewidth]{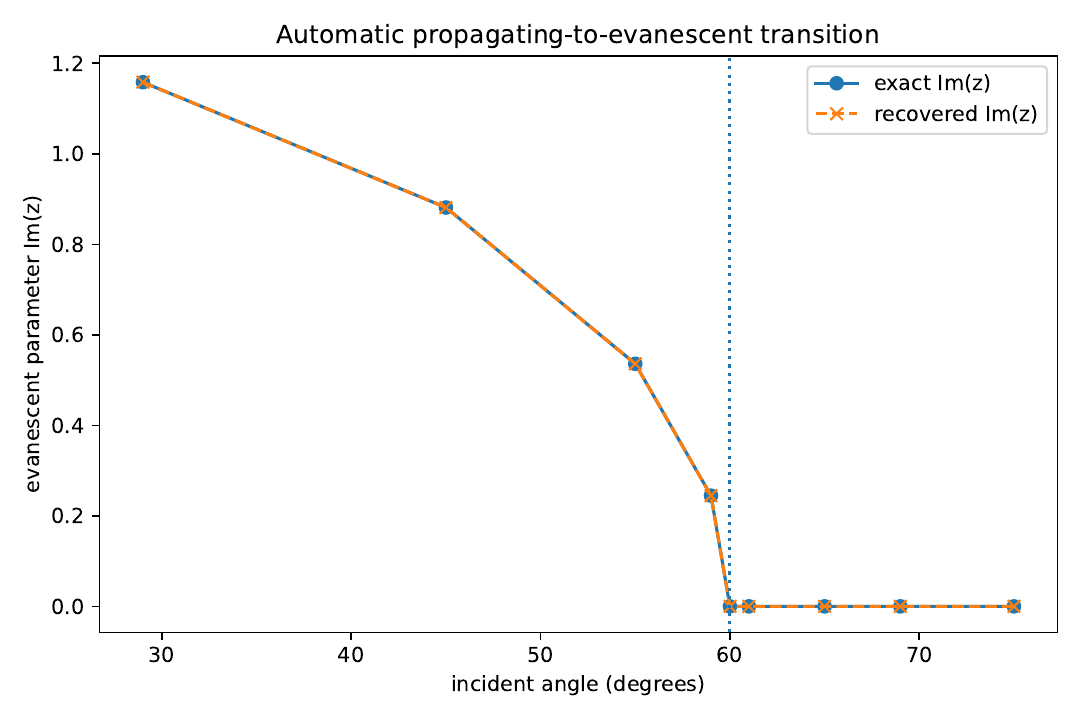}\\[-1mm]
{\footnotesize (a) recovered evanescent parameter}
\end{minipage}\hfill
\begin{minipage}[t]{.45\textwidth}
\centering
\includegraphics[width=\linewidth]{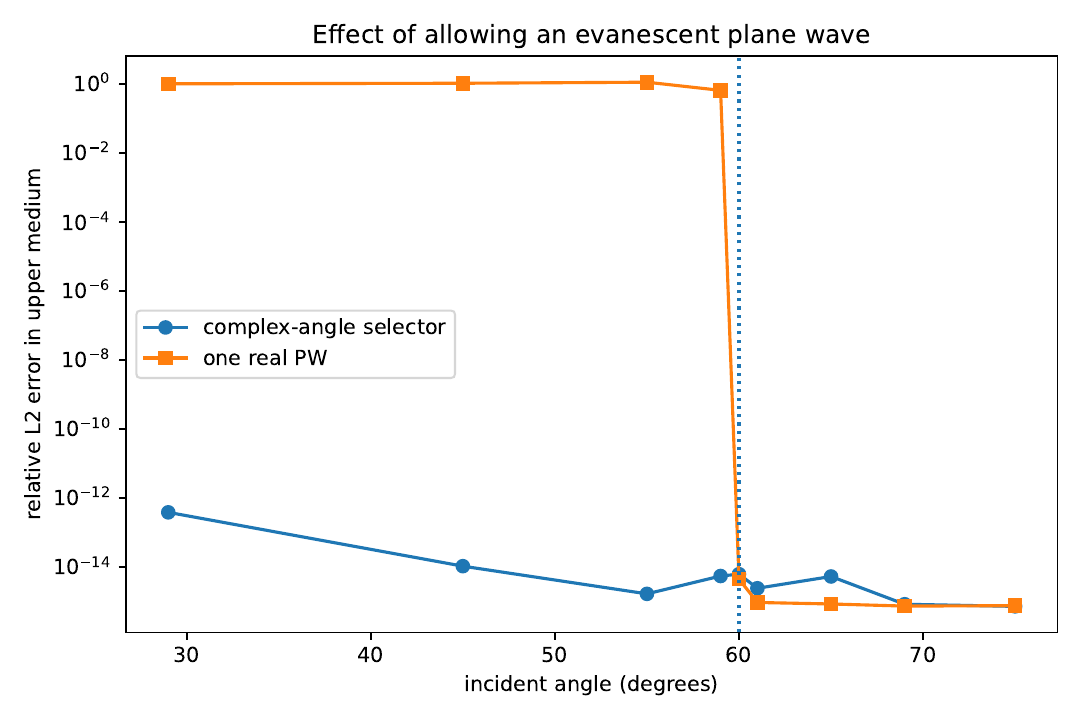}\\[-1mm]
{\footnotesize (b) complex versus real transmitted PW}
\end{minipage}
\caption{Automatic transition in the fluid--fluid problem \eqref{eq:transmission}. The dotted line is $\theta_c=60^\circ$. Left: exact and recovered $\Im z_t$. Right: relative $L^2$ error in the upper medium using the recovered complex angle and using the same one-PW model constrained to a real angle. No critical-angle test is used by the selector.}
\alttext{Alt text: Two plots versus incident angle. Left: exact and recovered imaginary transmitted angle agree through the $60$-degree critical transition. Right: the complex-angle representation keeps the upper-medium error near roundoff, while a real-only transmitted PW has order-one error below criticality.}
\label{fig:evanescent-transition}
\end{figure}

The transition is resolved to essentially machine precision.  Representative values are given in \cref{tab:transmission}.  In particular, at $29^\circ$,
\[
 z_t=1.1582805855\,\ii,
\]
and ESPRIT returns the same imaginary part to $6.4\times10^{-14}$.  The full PWDG error is $5.18\times10^{-14}$ with $\kappa_{\GR}=2.25$.  Constraining the transmitted PW to be real gives upper-medium relative error $1.03$; the error is representational, not a failure of the graph-normalized global solve.

\begin{table}[htbp]
\centering
\caption{Automatic propagating-to-evanescent transmission recovery.  $z_t$ is the exact complex angle from \eqref{eq:transmitted-complex-angle}; $\widehat z_t$ is recovered from the local modal trace.  The real-PW column is the upper-medium relative $L^2$ error when the transmitted PW is constrained to $\Im z=0$.}
\label{tab:transmission}
\footnotesize
\begin{tabular}{rllllrr}
\toprule
$\theta_i$ & regime & $z_t$ & $\widehat z_t$ & global rel. $L^2$ & real-PW upper error & $\kappa_{\GR}$\\
\midrule
$69^\circ$ & prop. & $44.214356^\circ$ & $44.214356^\circ$ & $9.77\times10^{-16}$ & $7.29\times10^{-16}$ & 1.84\\
$60^\circ$ & grazing & $0$ & $3.6\times10^{-17}\ii$ & $8.49\times10^{-15}$ & $4.43\times10^{-15}$ & 1.77\\
$59^\circ$ & evan. & $0.244649\ii$ & $0.244649\ii$ & $4.97\times10^{-15}$ & $6.67\times10^{-1}$ & 1.77\\
$45^\circ$ & evan. & $0.881374\ii$ & $0.881374\ii$ & $3.20\times10^{-14}$ & $1.07$ & 2.45\\
$29^\circ$ & evan. & $1.158281\ii$ & $1.158281\ii$ & $5.18\times10^{-14}$ & $1.03$ & 2.25\\
\bottomrule
\end{tabular}
\end{table}

Finally, we compare local trace selection with the globally coupled residual search on the same approximation family.
On the Herglotz--DtN field, both methods optimize the same three shared real PW directions from the same start.  The global objective \eqref{eq:constrained-opt} assembles and solves PWDG at every trial; the local objective \eqref{eq:modal-varpro} performs only elementwise least squares and one final PWDG solve.  FB functions are excluded so that the comparison isolates the search objective.  The timing comparison likewise isolates this repeated-search cost: any trace sampling or modal projection needed to construct the local data is a one-time preprocessing cost per update and is not included in either nonlinear trial time.  Thus \cref{tab:timing,tab:timing-levels} should be read as a comparison of search stages, not as a universal end-to-end speedup when trace acquisition itself is expensive.

\begin{table}[htbp]
\centering
\caption{Cost of global-residual and local-trace selection on the same three-direction Herglotz--DtN problem at $k=16$.  Total time includes the final DG solve.}
\label{tab:timing}
\footnotesize
\begin{tabular}{lrrrrc}
\toprule
method & evals & selection (s) & final DG (s) & total (s) & rel.\ $L^2$\\
\midrule
global residual $\Jfun$ & 271 & 10.428 & 0.145 & 10.573 & $7.346\times10^{-1}$\\
local Cauchy trace & 240 & 0.346 & 0.037 & 0.383 & $7.383\times10^{-1}$\\
\bottomrule
\end{tabular}
\end{table}

The two errors differ by $0.5\%$, while total search time differs by a factor $27.6$.  The gain is per trial: $38$\,ms for the globally coupled objective versus $1.4$\,ms for the local trace objective.

Repeating the comparison under mesh refinement separates the element-local and globally coupled costs.
We repeat the same three-direction Herglotz--DtN search on $(n_\theta,n_r)=(8,1),(16,2),(32,4)$, using identical starts, tolerances, and nonlinear iteration.  Local trials contain only independent element problems; global trials assemble and solve the PWDG system.

\begin{table}[htbp]
\centering
\caption{Search-cost scaling at $k=16$ on one 2.1\,GHz Xeon core.  Global trials assemble and solve PWDG; local trials solve only elementwise trace problems.}
\label{tab:timing-levels}
\footnotesize
\begin{tabular}{rrrrrrrrr}
\toprule
elements & dofs & \multicolumn{2}{c}{per trial (ms)} & \multicolumn{2}{c}{total (s)} & ratio & \multicolumn{2}{c}{rel.\ $L^2$}\\
 & & global & local & global & local & & global & local\\
\midrule
  8 &  24 &   6.3 & 0.31 &  0.946 & 0.075 & 12.6 & $0.780$ & $0.743$\\
 32 &  96 &  34.2 & 1.12 &  6.810 & 0.254 & 26.8 & $0.564$ & $0.575$\\
128 & 384 & 307.8 & 3.93 & 58.761 & 1.240 & 47.4 & $0.426$ & $0.460$\\
\bottomrule
\end{tabular}
\end{table}

The local per-trial cost scales approximately with element count, while the total global/local ratio grows from $12.6$ to $47.4$; the final errors remain within $8\%$ on all three levels.

\subsection{Broad and mixed angular content}\label{sec:broad-content}
For a broad $96$-direction random-phase Herglotz field, sparse direction identification provides no structural advantage.  At moderate dimension, pure FB and equispaced PW have comparable angular reach; at high dimension their difference is stability, because dense PW traces become correlated while equilibrated FB modes remain orthogonal.  \Cref{tab:p17-fb-trace} records the FB truncation scale at $p=17$.

\begin{table}[htbp]
\centering
\caption{Relative Cauchy-trace error of the pure FB truncation at $p=17$.}
\label{tab:p17-fb-trace}
\footnotesize
\begin{tabular}{lc}
\toprule
spectrum & relative trace error at $p=17$\\
\midrule
sparse directional & $8.4\times10^{-5}$\\
broad angular & $9.3\times10^{-5}$\\
mixed & $1.0\times10^{-4}$\\
\bottomrule
\end{tabular}
\end{table}

At high order, trace equilibration becomes decisive for Fourier--Bessel coordinates.
Without FB trace equilibration, the retained global rank stalls at $184$ and the error remains near $2.5\times10^{-8}$.  Equilibration retains the full modal dimension and restores high-order convergence (\cref{tab:equil}); this is the scaling effect predicted by \cref{thm:T1,rem:raw-coeff}.

\begin{table}[htbp]
\centering
\caption{High-order pure FB spaces $(0,p)$ at $\kappa=8$ with and without trace equilibration.  Parentheses give retained over nominal global graph rank.}
\label{tab:equil}
\footnotesize
\begin{tabular}{rcc}
\toprule
$p$ & FB without equilibration & FB with trace equilibration\\
\midrule
25 & $2.45\times10^{-8}$ $(184/200)$ & $2.69\times10^{-9}$ $(200/200)$\\
33 & $2.45\times10^{-8}$ $(184/264)$ & $1.16\times10^{-13}$ $(264/264)$\\
\bottomrule
\end{tabular}
\end{table}

For mixed fields, \eqref{eq:crossover-poly} gives the field-dependent exchange points between prescribed candidate spaces.  The selector recovers $(3,9)$ in \cref{tab:selection}, while \cref{tab:blind} shows spatially varying mixed choices that improve on both pure families at equal nominal dimension.

\subsection{ESPRIT capacity and conditioning}\label{sec:esprit-capacity}
This section isolates algebraic direction identification from the DG approximation.  Exact modal data are supplied, so the only limits are sample count, Hankel conditioning, and floating-point arithmetic.

We first fix the modal window and increase the recovered rank.  The residual continuation of \citet{Kapita2026Directions} recovered its nested ray family automatically through $q=19$; its $q=20$ birth entered a false basin, while a nearby-birth control recovered the same $20$-ray space to roundoff.  With $N=81$ exact modal samples, direct ESPRIT instead recovers every admissible rank $q\le40$.  At the endpoint $q=40$, the shifted matrix $U_0$ is square ($40\times40$) and nonsingular; ESPRIT does not require a nullspace of the Hankel matrix.  Requesting $q=41$ violates the shift-dimension condition $q\le(N-1)/2$: the shifted signal matrix has only $40$ rows and rank $40$, and the resulting maximum angular error is $25.30^\circ$.

For $N=401$, the balanced Hankel matrix is $201\times201$ and \cref{thm:esprit-exact} permits $q\le200$.  We retain the first twenty directions and coefficient phases of \citet{Kapita2026Directions}; subsequent directions are added deterministically by farthest-point insertion on a $0.05^\circ$ angular grid, with phases $\phi_j=\operatorname{mod}(0.731j+0.173j^2,2\pi)-\pi$ and coefficients $e^{\ii\phi_j}$.

The error remains near angular roundoff through more than $160$ rays and is $8.5\times10^{-9}$ degrees at $q=184$, where $\kappa_H:=\sigma_1(H)/\sigma_q(H)=6.27\times10^4$.  At $q=194$, $\kappa_H=4.34\times10^8$ and the error is $1.08\times10^{-4}$ degrees; at $q=198$, $\kappa_H=2.52\times10^{12}$ and the error is $1.65^\circ$.  Thus the high-rank failure is a conditioning limit of the chosen modal window, not a representation limit.

\begin{figure}[H]
\centering
\includegraphics[width=.68\textwidth]{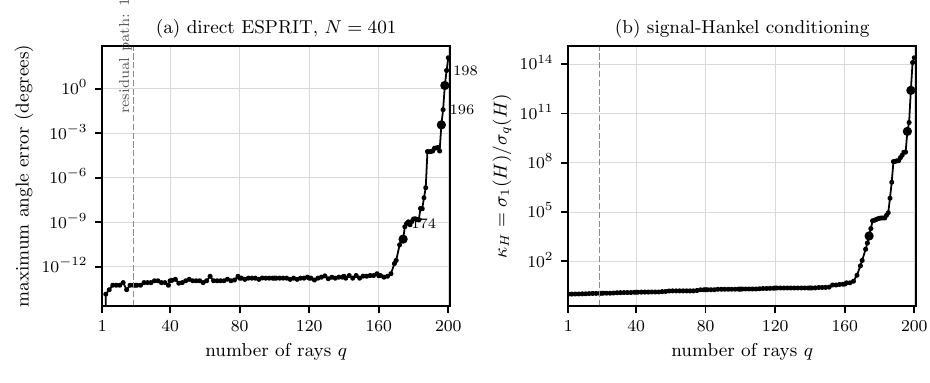}
\caption{Direct ESPRIT with $N=401$ target-frequency modal samples. Recovery remains close to roundoff through more than $160$ rays; failure near $q=198$ coincides with growth of $\kappa_H=\sigma_1(H)/\sigma_q(H)$.}
\alttext{Alt text: Two ESPRIT capacity plots versus number of rays $q$ for $N=401$ modal samples. Left: maximum angle error stays near roundoff until roughly $160$ rays and then rises rapidly near $q=198$. Right: the Hankel condition number shows corresponding rapid growth.}
\label{fig:esprit-capacity}
\end{figure}

The rank limit moves with the modal-window length.  For example, $q=900$ at $N=2001$ has $\kappa_H=1.00\times10^{12}$ and $1.27\times10^{-1}$ degree error, whereas the same rank at $N=2201$ has $\kappa_H=2.16\times10^2$ and $9.46\times10^{-12}$ degree error (\cref{tab:esprit-conditioning}).  This is consistent with Vandermonde conditioning results for separated nodes \citep{Moitra2015,LiLiao2020}.
The same mechanism governs practical weak or nonideal ray data.  With $N=81$ and three target rays, \cref{tab:esprit-practical} isolates angular separation, amplitude imbalance, and a deterministic $96$-direction background.
\begin{table}[htbp]
\centering
\caption{Finite-ray ESPRIT sensitivity with $N=81$ modal samples.  The background amplitude $\rho$ multiplies a unit-energy $96$-direction random-phase field.}
\label{tab:esprit-practical}
\footnotesize
\begin{tabular}{lrrr}
\toprule
perturbation & parameter & max. angle error (deg.) & $\kappa_H$\\
\midrule
ray separation & $1^\circ$ & $2.84\times10^{-14}$ & $9.28\times10^1$\\
ray separation & $0.1^\circ$ & $5.15\times10^{-13}$ & $9.26\times10^3$\\
weak-ray amplitude & $10^{-4}$ & $2.27\times10^{-13}$ & $1.00\times10^4$\\
weak-ray amplitude & $10^{-8}$ & $3.67\times10^{-9}$ & $1.00\times10^8$\\
diffuse background & $\rho=10^{-2}$ & $6.21\times10^{-3}$ & $2.00$\\
diffuse background & $\rho=3\times10^{-1}$ & $2.03\times10^{-1}$ & $1.94$\\
\bottomrule
\end{tabular}
\end{table}
The first two blocks show the expected loss of identifiability through $\sigma_q(H)$.  The diffuse case is different: the data are no longer exactly rank three, so $\kappa_H$ measures conditioning of the retained three-dimensional subspace but not closeness of the data to a three-exponential model.  The nonzero angle shift at $\rho=10^{-2}$ despite $\kappa_H\approx2$ is therefore model mismatch rather than ill-conditioning, consistent with \cref{thm:esprit-stability,rem:bridge-general}.  In the selector this case is not accepted on $\kappa_H$ alone; the variable-projection residual and the final trace error remain part of the comparison.  Practically, a proposed ray is retained only while its signal singular value is above the trace-data floor and the associated reduction of the weighted modal residual is commensurate with the requested accuracy.  MUSIC or sparse dictionary methods can be substituted for the initializer, but they do not change the trace metric, hybrid approximation identity, or rank test developed here.

For an $L\times K$ Hankel matrix, a dense SVD costs $O(LK\min\{L,K\})$; a truncated rank-$q$ factorization reduces this to $O(LKq)$ when $q\ll\min\{L,K\}$.  The work is element local and parallel.  The large windows $N=401$--$2201$ below are conditioning stress tests, not default selector sizes; production windows are chosen only large enough to satisfy $L\ge q+1$, $K\ge q$, and to keep the retained $\tau_m$ above the trace-data floor.  Since a resolved angular bandwidth typically grows with $\kappa h$, very high-frequency patches can make a large dense ESPRIT solve unattractive; this is precisely the regime in which a truncated factorization is preferable for sparse content, while broad angular spectra favor FB coordinates.

For the ESPRIT shift fit define
\begin{equation}\label{eq:kappa-sh}
 \kappa_{\rm sh}:=\kappa_2(U_0)=\frac{\sigma_1(U_0)}{\sigma_q(U_0)}.
\end{equation}
The table shows that $\kappa_{\rm sh}$ remains modest until the Hankel signal space is already ill-conditioned.  This matches \cref{thm:esprit-stability}: the leading amplification is $\sigma_q(H)^{-1}$, followed by the pseudoinverse factor $\sigma_q(U_0)^{-1}$.

\begin{table}[!htbp]
\centering
\caption{Representative ESPRIT conditioning data.  Increasing the modal window can restore a fixed rank by reducing $\kappa_H$.}
\label{tab:esprit-conditioning}
\footnotesize
\begin{tabular}{rrrrr}
\toprule
$N$ & $q$ & max. angle error (deg.) & $\kappa_H$ & $\kappa_{\rm sh}$\\
\midrule
1001 & 400 & $2.27\times10^{-13}$ & $3.66$ & $1.50$\\
1001 & 470 & $8.16\times10^{-3}$ & $8.04\times10^{11}$ & $36.5$\\
1401 & 580 & $5.95\times10^{-8}$ & $9.17\times10^{5}$ & $4.33$\\
2001 & 900 & $1.27\times10^{-1}$ & $1.00\times10^{12}$ & $9.64$\\
2201 & 900 & $9.46\times10^{-12}$ & $2.16\times10^{2}$ & $1.75$\\
2201 & 940 & $1.70\times10^{-1}$ & $2.93\times10^{12}$ & $4.31$\\
\bottomrule
\end{tabular}
\end{table}
\FloatBarrier

\section{Conclusions}\label{sec:conclusion}
The central point of this work is that local representation quality and global algebraic conditioning should be treated as separate questions.  In the scaled Cauchy-trace geometry, Fourier--Bessel modes are orthogonal with explicit weights and the same weights generate the spectrum of equispaced plane-wave traces.  This identifies two different numerical phenomena: small Fourier--Bessel amplitudes are a scaling effect removable by equilibration, whereas small plane-wave Gram eigenvalues indicate a genuine loss of effective trace dimension.  The hybrid identity \eqref{eq:T3} then expresses the best mixed PW--FB approximation as a weighted exponential-fitting problem.

Complex angles require no change in this structure.  A propagating or evanescent plane wave generates the same exponential modal sequence, with evanescence encoded by the modulus of its ESPRIT node.  The exact-recovery and perturbation results in \cref{thm:esprit-exact,thm:esprit-stability}, together with the boundary-trace Lipschitz estimate of \cref{lem:pw-direction-lipschitz}, yield a direct perturbation-to-PWDG estimate under the standard quasi-optimality hypothesis.  After the local space has been fixed, trace-Riesz coordinates and graph--Riesz coordinates act at different levels; \cref{prop:GR} shows that the latter produces the normal form $S-\ii I$.

The computations are consistent with these distinctions.  Sparse ray fields are recovered to roundoff, the plane-wave-only high-order experiment stalls when the resolvable local trace rank saturates, and mixed PW--FB spaces pass that floor without degrading the graph-normalized solve.  The transmission test crosses from propagation to evanescence without a critical-angle switch in the algorithm.  The disk-based modal identities require homogeneous continuation to a containing circle; when this is unavailable, as at the re-entrant corner of the L-shaped test, the same candidate spaces can instead be compared on the physical element boundary.

The practical limits are equally explicit.  A PW rank is attempted only when a sufficiently long modal window is resolved above the data and arithmetic floor; $10^{-12}$ is the binary64 threshold used in the reported calculations, not a universal constant.  Strong evanescence enlarges the perturbation constants exponentially through $e^{\kappa h\sinh\eta_{\max}}$, and broad high-frequency angular spectra reduce the advantage of sparse direction recovery.  Finally, the perturbation-to-DG estimate inherits the stability assumptions of the underlying PWDG problem and does not regularize a near-resonant boundary-value problem.  Extending the analysis to variable coefficients, three-dimensional Trefftz families and fully adaptive meshes requires additional approximation and stability arguments and is left for future work.

\section*{Acknowledgements}
Generative AI tools were used during manuscript preparation for language editing, typesetting assistance and code assistance. The author verified the mathematical derivations, numerical results and final manuscript content.

\section*{Funding}
This research received no external funding.

\section*{Conflict of interest}
The author declares no competing interests.

\section*{Data and code availability}
Source code for the numerical experiments is available to the editors and referees for the purpose of peer review.

\end{document}